\definecolor{vert}{rgb}{0,0.6,0}
\definecolor{vert}{rgb}{0,0.6,0}
\numberwithin{figure}{section}
\theoremstyle{plain}
\newtheorem{thm}{Theorem}[section]
\newtheorem{quest}{Question}
\newtheorem{lem}[thm]{Lemma}
\newtheorem{cor}[thm]{Corollary}
\newtheorem{prop}[thm]{Proposition}
\theoremstyle{remark}
\newtheorem{rem}{\bf{Remark}}
\numberwithin{equation}{section}
\newcommand{\N}{\mathbb{N}}
\newcommand{\R}{\mathbb{R}}
\newcommand{\cA}{\mathcal{A}}
\newcommand{\cB}{\mathcal{B}}
\newcommand{\al}{\alpha}
\newcommand{\gam}{\gamma}
\newcommand{\del}{\delta}
\newcommand{\ol}{\overline}
\newcommand{\source}{X_0}
\newcommand{\target}{Z}
\begin{document}

\title[Selection problems in Large Deviations in Games]
{Selection problems in Large Deviations in Games under the logit choice protocol}

\author[H. V. TRAN]
{Hung V. Tran}

\dedicatory{In memory of William H. Sandholm}

\thanks{
The work of HT is partially supported by NSF grant DMS-1664424 and NSF CAREER grant DMS-1843320.
}

\address[H. V. Tran]
{
Department of Mathematics, 
University of Wisconsin Madison, Van Vleck Hall, 480 Lincoln Drive, Madison, Wisconsin 53706, USA}
\email{hung@math.wisc.edu}

%\date{\today}
\keywords{Selection problems; Large Deviations in Games; the logit choice protocol;  small noise double limit; large population double limit; first-order Hamilton-Jacobi equations; state-constraint boundary conditions; viscosity solutions}
\subjclass[2010]{
35B40, %Asymptotic behavior of solutions, 
35F21 % Hamilton-Jacobi equations 
49L25 %Viscosity solutions
}

\maketitle

\begin{abstract}
We study large deviations in coordination games under the logit choice protocol.
A major open question that \cite{SS, SS2} posed is whether large deviations properties under the small noise double limit and the large population double limit  are identical or not.
We rephrase this open question in the PDE language as some selection problems, and we provide some definitive answers to these problems.
\end{abstract}

%%%%%%%%%%%%%%%%%%%%%%%%%%%%%%%%%%%%%%%%%%%%%%%%%%%%%%%%%%%%%%%%%%%%%%%%%%%%%%%%%%%%%%%%

\section{Introduction}

\subsection{The general setting}
We first give a brief discussion of optimal control problems with semilinear running costs and state constraints considered in \cite{SS, STA}.
Let $n,m \in \N$ be such that $m\leq n$.
Let $X \subset \R^n$ be a closed $m$-dimensional polytope. 
Let $TX$ denote the set of tangent vectors from points in the relative interior $X ^\circ$ of $X$.  
If $X$ is $n$-dimensional, then $TX = \R^n$; more generally, if the affine hull of $X$ is a translation of a subspace $Y \subset \R^n$, then $TX = Y$.  
In what follows, topological statements are made with respect to the relative topology on $X$; for example, we will refer to $X^\circ$ as the interior of $X$.  
Likewise, derivatives of functions $f \colon X\to\R^m$ will be understood as maps $Df \colon X \to \mathcal{L}(TX, \R^m)$ from $X$ to linear functions from $TX$ to $\R^m$.

Let $TX(x) = \{t(y-x) \colon y \in X, t \geq 0\} \subseteq TX$ be the tangent cone of $X$ at $x$.  
Here, $TX(x)$ is the set of feasible controls at state $x$, though as we note shortly, it will be enough to restrict attentions to controls in a compact subset of $TX(x)$.

We assume that the running cost function $L \colon X \times TX \to [0, +\infty]$ takes a semilinear form.  
Specifically, we assume that for a given Lipschitz continuous function $\Psi\colon X \to [0,\infty)^n$, we have
\begin{equation}\label{eq:RC}
L(x,v) = \begin{cases}
\displaystyle\sum_{i=1}^n \Psi_i(x)\,[v_i]_+\qquad&\text{if }v \in TX(x),\\
+\infty\qquad&\text{otherwise}.
\end{cases}
\end{equation}
Here, $[v_i]_+=\max\{v_i,0\}$ for $1\leq i \leq n$.
Thus, the constraint that the state remain in $X$ is built into the definition of running costs.
We also say that $L$ is our given Lagrangian.

For $T>0$, let $\Phi_T$ be the set of Lipschitz continuous paths $\phi\colon [0, T] \to X$, and let $\Phi = \bigcup_{T\ge0}\Phi_T$.  Then the \emph{path cost function} $c\colon \Phi \to \R_+$ is defined by
\begin{equation}\label{eq:PathCost}
c(\phi) = \int_0^T L(\phi(t), \dot\phi(t))\,d t\qquad\text{when }\phi \in \Phi_T.
\end{equation} 

The \emph{source problem} for a given compact set $\source \subset X$ is that of finding the minimal cost of reaching each state in $X$ from a free initial condition in $\source$.  The value function for the source problem for $\source$ is, for $x\in X$,
\begin{equation}\label{eq:SP}\tag{SP}
W(x) = \inf\:\{c(\phi) \colon \phi \in \Phi_T\text{ for some }T \geq 0, \phi(0) \in \source, \phi(T) = x \}.
\end{equation}
Likewise, the \emph{target problem} for a given compact set $Z \subset X$ is that of finding the minimal cost of reaching a free state in $Z$ from initial condition $x\in X$.  The value function for the target problem for $Z$ is
\begin{equation}\label{eq:TP}\tag{TP}
V(x) = \inf\:\{c(\phi) \colon \phi \in \Phi_T\text{ for some }T \geq 0,  \phi(0) = x, \phi(T) \in Z  \}.
\end{equation}

\begin{rem}\label{rem:Speed}
It follows immediately from the semilinearity of the running cost function \eqref{eq:RC} that the cost of a path does not depend on the speed at which it is traversed:  if $\phi \in \Phi_T$ and $\hat\phi \in \Phi_{\hat T}$ for some $T, \hat T>0$ differ only by a reparameterization of time, then $c(\phi) = c(\hat\phi)$.
Because of this, the solutions to problems \eqref{eq:SP} and \eqref{eq:TP} do not change if we restrict the control variable $u$ to a compact convex set whose conical hull is $TX$.  Viewed through the prism of feedback controls, semilinearity implies that we need only determine the optimal \emph{directions} of motion from each state; the speed of motion in an optimal direction is irrelevant.
\end{rem}

We take advantage of this property by introducing a convenient restriction on the control variable.  
Let $|\cdot|$ denote the $\ell_1$ norm on $\R^n$, so that $|u| = \sum_{i=1}^n |u_i|$.  
For $r > 0$, let $ B_r = \{u \in \R^n \colon |u|\leq r\}$ be the closed ball  of radius $r$ on $\R^n$.  
The foregoing discussion shows that  in solving \eqref{eq:SP} and \eqref{eq:TP}, there is no loss in restricting attention to paths $\phi\in\Phi$ with $\dot\phi(t) \in  B_r$ for almost all $t \geq 0$ for any fixed $r>0$.  
To take advantage of this, we replace the running cost function \eqref{eq:RC} with one in which controls outside of $ B_r$ are infeasible:
\begin{equation}\label{eq:RCNew}
L(x,v) = \begin{cases}
\displaystyle\sum_{i=1}^n \Psi_i(x)\,[v_i]_+\qquad&\text{if }v \in TX(x) \cap  B_r,\\
+\infty\qquad&\text{otherwise}.
\end{cases}
\end{equation}
For $v \in \R^n$, we define the componentwise positive part function $[v]_+$ by $([v_+])_i = [v_i]_+$, and we define $[v]_-$ analogously.  Using this notation, we can write the first case of \eqref{eq:RCNew} concisely as
\begin{equation}\label{eq:RC2}
L(x,v) =\Psi(x) \cdot [v]_+ \qquad\text{if } v\in TX(x)\cap  B_r.
\end{equation}
Set $Y=TX$.
Let $H \colon X \times Y \to \R$ denote the corresponding Hamiltonian
\begin{equation}\label{eq:Ham}
H(x, u) = \max_{v \in Y \cap  B_r} \left( u\cdot v - L(x,v) \right)=\max_{v \in Y \cap  B_r} \left( u\cdot v - \Psi(x) \cdot [v]_+ \right). 
\end{equation}
As usual, $H(x, \cdot)$ is the Legendre transform (convex conjugate) of $L(x, \cdot)$.
Let $\|{\Psi}\|_\infty = \max_{x \in X}|\Psi(x)|$ denote the $L^\infty$ norm of $\Psi$. 
Thanks to formula \eqref{eq:Ham}, $H(x, \cdot)$ satisfies the following linear lower bound:
\begin{equation}\label{eq:coercive}
H(x,u) \geq \frac{r}{n}|u| - r\|\Psi\|_{\infty}.
\end{equation}
This gives us that $H$ is uniformly coercive on $X$, that is,
\[
\lim_{|u| \to \infty} \min_{x\in X} H(x,u)=+\infty.
\]

Here are some of the main results obtained in \cite{STA}. We say that a function $V \colon X \to \R$ is \emph{maximal} with respect to given properties if for any other function $V_0\colon X \to \R$ satisfying the properties, we have $V_0 \leq V$ on $X$.  

Theorems \ref{thm:Source} and \ref{thm:Target} characterize the solutions to the source and target problems in terms of subsolutions in the almost everywhere sense.  These inequalities are only required to hold almost everywhere, and in particular need not be checked at boundary states $\partial X$, or at states where the 
candidate function is not differentiable. 

\begin{thm}\label{thm:Source}
The solution to the source problem \eqref{eq:SP} is the maximal Lipschitz continuous function $W\colon X \to \R$ satisfying
\begin{equation}\label{eq:SolSource}
\begin{cases}
H(x,DW(x)) \leq 0 \qquad&\text{for almost all }x \in X^\circ;\\
W(x^\ast) = 0\qquad&\text{for all }x^\ast \in \source.
\end{cases}
\end{equation}
\end{thm}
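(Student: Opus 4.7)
The plan is to establish the theorem in two directions: that $W$ itself is a Lipschitz function satisfying \eqref{eq:SolSource}, and that any other Lipschitz function satisfying \eqref{eq:SolSource} is dominated pointwise by $W$.

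For the first direction, Lipschitz continuity of $W$ follows from the trivial upper bound $L(x,v) \le \|\Psi\|_\infty |v|$ for $v \in TX(x) \cap B_r$: given $x, y \in X$, the straight segment from $y$ to $x$ lies in $X$ by polytope convexity, can be reparameterized to satisfy $\dot\phi \in B_r$ (Remark \ref{rem:Speed}), and has cost at most $\|\Psi\|_\infty |x-y|$; concatenating with an $\ep$-optimal path to $y$ shows that $W$ is $\|\Psi\|_\infty$-Lipschitz. The boundary condition $W(x^\ast)=0$ on $\source$ is immediate from the constant path. To derive the a.e.\ subsolution inequality, I would invoke the dynamic programming principle $W(x) \le W(z) + c(\psi)$, valid for any admissible path $\psi$ from $z$ to $x$. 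At a point $x \in X^\circ$ where $W$ is differentiable (which holds a.e.\ by Rademacher's theorem), fix $v \in TX(x) \cap B_r$ and test with the linear path $\psi(t) = x - (h-t)v$ for $t \in [0,h]$, with $h>0$ small enough that the segment stays in $X^\circ$. Dividing by $h$ in the DPP and sending $h \to 0^+$, the continuity of $\Psi$ yields $DW(x) \cdot v \le L(x,v)$; maximizing over $v$ gives $H(x, DW(x)) \le 0$.

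For maximality, let $V_0$ be any Lipschitz function satisfying \eqref{eq:SolSource}. Fix $x \in X$ and, for $\ep>0$, pick an admissible $\phi \in \Phi_T$ with $\phi(0)=y \in \source$, $\phi(T)=x$, $\dot\phi \in B_r$, and $c(\phi) \le W(x)+\ep$. The target estimate is
\[
V_0(x) = V_0(\phi(T)) - V_0(\phi(0)) \le \int_0^T L(\phi(t), \dot\phi(t))\,dt = c(\phi),
\]
which upon optimizing over $\phi$ and $\ep$ gives $V_0(x) \le W(x)$.

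The main obstacle is the well-known chain-rule issue: the inequality $H(x, DV_0(x)) \le 0$ holds only off a Lebesgue-null set $N \subset X^\circ$, but $\phi$ could a priori spend positive time on $N$, blocking direct use of the Fenchel--Young inequality $DV_0(\phi) \cdot \dot\phi \le L(\phi, \dot\phi) + H(\phi, DV_0(\phi))$. My preferred remedy is mollification: extend $V_0$ to a neighborhood of $X$ in its affine hull preserving the Lipschitz constant, convolve with a smooth kernel $\rho_\ep$ to obtain $V_0^\ep \in C^\infty$, and exploit the convexity of $H$ in the gradient variable, together with Jensen's inequality and the continuity of $H$ in $x$, to conclude $H(x, DV_0^\ep(x)) \le \om(\ep)$ on any subdomain compactly contained in $X^\circ$. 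The classical chain rule then applies to $V_0^\ep \circ \phi$, yielding $V_0^\ep(\phi(T)) - V_0^\ep(\phi(0)) \le c(\phi) + \om(\ep)\,T$; passing to the limit $\ep \to 0$ and using uniform convergence $V_0^\ep \to V_0$ delivers the required inequality. Minor care near $\partial X$ is handled by exhausting $X^\circ$ by compact subsets and using continuity of $V_0$. An alternative route is to invoke the classical fact that, for $H$ convex in $p$, a Lipschitz a.e.\ subsolution is automatically a viscosity subsolution, and then apply the standard verification theorem; but the mollification argument is more self-contained and keeps the proof in the almost-everywhere framework advertised in the statement.
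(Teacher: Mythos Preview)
The paper does not prove this theorem at all: Theorems~\ref{thm:Source}--\ref{thm:VerTarget} are introduced with the sentence ``Here are some of the main results obtained in \cite{STA}'' and are quoted from that reference without proof. So there is no in-paper argument to compare your proposal against.

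On its own merits your outline is the standard route and is essentially sound. The one place that deserves more than the phrase ``minor care'' is the boundary issue in the maximality step. An $\ep$-optimal path $\phi$ for \eqref{eq:SP} may run along $\partial X$ (for instance when $x\in\partial X$, or when $\source$ meets $\partial X$), and your mollified function $V_0^\ep$ satisfies $H(\cdot,DV_0^\ep)\le \om(\ep)$ only on compact subsets of $X^\circ$; exhausting $X^\circ$ and invoking continuity of $V_0$ does not by itself control $\int_0^T DV_0^\ep(\phi)\cdot\dot\phi$ on the boundary portion of the trajectory. A clean fix in this convex-polytope setting is to retract the path toward the barycenter $\bar x$ of $X$: set $\phi_s(t)=(1-s)\phi(t)+s\bar x$ for small $s>0$, so that $\phi_s$ stays in a fixed compact subset of $X^\circ$. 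Since $\Psi$ is Lipschitz and the running cost is semilinear, $c(\phi_s)=c(\phi)+O(s)$, and since $V_0$ is Lipschitz, the endpoint values change by $O(s)$ as well. Apply your mollification/chain-rule argument to $\phi_s$, send $\ep\to 0$, then $s\to 0$. With this adjustment the proof goes through.
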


\begin{thm}\label{thm:Target}
The solution to the target problem \eqref{eq:TP} is the maximal Lipschitz continuous function $V\colon X \to \R$ satisfying
\begin{equation}\label{eq:SolTarget}
\begin{cases}
H(x,-DV(x)) \leq 0\qquad&\text{for almost all }x \in X^\circ;\\
V(y^\ast) = 0\qquad&\text{for all }y^\ast \in Z.
\end{cases}
\end{equation}
\end{thm}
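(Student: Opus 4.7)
The plan is to establish three facts: (i) the value function $V$ defined in \eqref{eq:TP} is finite and Lipschitz continuous on $X$; (ii) $V$ satisfies the subsolution inequality and boundary condition in \eqref{eq:SolTarget}; and (iii) any other Lipschitz continuous $V_0 \colon X \to \R$ satisfying \eqref{eq:SolTarget} is pointwise dominated by $V$. Together these yield maximality. The argument is the ``target'' analogue of that for Theorem \ref{thm:Source}, and one could in principle deduce the result from Theorem \ref{thm:Source} by time reversal, but I outline the direct approach.

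For (i), finiteness follows from compactness: since $X$ is a compact polytope and $Z$ is nonempty, every $x \in X$ can be connected to some $y^\ast \in Z$ by a Lipschitz path with $\dot\phi(t) \in TX(\phi(t)) \cap B_r$ almost everywhere and length at most $C\diam(X)$, and \eqref{eq:RCNew} then bounds its cost by $\|\Psi\|_\infty C \diam(X)$. The Lipschitz estimate $|V(x) - V(y)| \leq C|x - y|$ comes from concatenating a short admissible path from $x$ to $y$ inside $X$ with a near-optimal path from $y$ to $Z$, and swapping the roles of $x$ and $y$. For (ii), the dynamic programming principle
\[
V(x) \leq \int_0^t L(x + sv, v)\,ds + V(x + tv)
\]
holds for every $x \in X^\circ$, $v \in Y \cap B_r$, and $t > 0$ small enough that $x + sv \in X$ for all $s \in [0,t]$. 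By Rademacher's theorem, $V$ is differentiable almost everywhere; at any such $x \in X^\circ$, dividing by $t$ and sending $t \to 0^+$ gives $-DV(x) \cdot v - L(x,v) \leq 0$, and taking the supremum over $v \in Y \cap B_r$ yields $H(x, -DV(x)) \leq 0$. The boundary condition $V = 0$ on $Z$ is immediate.

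For (iii), the central claim is that for every $x \in X$ and every $\phi \in \Phi_T$ with $\phi(0) = x$ and $\phi(T) \in Z$, one has $V_0(x) \leq c(\phi)$; passing to the infimum over $\phi$ then yields $V_0 \leq V$. The formal calculation is
\[
V_0(x) - V_0(\phi(T)) = -\int_0^T DV_0(\phi(t)) \cdot \dot\phi(t)\,dt \leq \int_0^T L(\phi(t), \dot\phi(t))\,dt,
\]
where the first step is the Lipschitz chain rule and the inequality uses the Fenchel--Young-type bound $-u \cdot v \leq H(x, -u) + L(x, v)$ together with the subsolution inequality $H(\cdot, -DV_0) \leq 0$.

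The main obstacle is that both the chain rule and the subsolution inequality hold only outside a null set, while $\phi$ may lie on $\partial X$ or traverse the exceptional set where $V_0$ fails to be differentiable on a set of positive measure. My plan is a mollification-plus-perturbation argument. First, extend $V_0$ Lipschitz to $\R^n$ and set $V_0^\varepsilon := V_0 * \rho_\varepsilon$ for a standard mollifier $\rho_\varepsilon$; by convexity of $H(x, \cdot)$ (Jensen's inequality) and Lipschitz continuity of $\Psi$, one obtains $H(x, -DV_0^\varepsilon(x)) \leq C\varepsilon$ at every $x \in X$ whose distance to $\partial X$ exceeds $\varepsilon$. Second, fix a point $p \in X^\circ$ and define $\phi^\varepsilon(t) := (1 - \varepsilon)\phi(t) + \varepsilon p$; by convexity of $X$ this path stays in $X^\circ$ at distance of order $\varepsilon$ from $\partial X$, while $c(\phi^\varepsilon) \to c(\phi)$ as $\varepsilon \downarrow 0$ by continuity of $\Psi$ and the semilinear form of $L$. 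The classical chain rule now applies to $V_0^\varepsilon \circ \phi^\varepsilon$, giving $V_0^\varepsilon(\phi^\varepsilon(0)) \leq c(\phi^\varepsilon) + V_0^\varepsilon(\phi^\varepsilon(T)) + C\varepsilon T$, and sending $\varepsilon \downarrow 0$ yields $V_0(x) \leq c(\phi) + V_0(\phi(T)) = c(\phi)$, completing the argument.
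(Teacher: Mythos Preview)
The paper does not prove Theorem~\ref{thm:Target}; it is quoted, together with Theorems~\ref{thm:Source} and~\ref{thm:VerTarget}, as one of ``the main results obtained in \cite{STA}'' (Sandholm, Tran, Arigapudi), and no argument for it appears in this paper. So there is no proof here to compare your proposal against.

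That said, your outline is a sound and standard approach to results of this type. The three-step structure---Lipschitz regularity of the value function, the a.e.\ subsolution property via the dynamic programming inequality, and maximality via the Fenchel--Young bound along competitor paths---is exactly what one expects. The mollification-plus-interior-perturbation device in step (iii) is the right way to deal with the null sets and the boundary, and your use of the convexity of $H(x,\cdot)$ (to pass the inequality through the mollifier via Jensen) together with the Lipschitz dependence of $H$ on $x$ (coming from the Lipschitz $\Psi$) is the correct mechanism for the $C\varepsilon$ error. A couple of points you would need to make precise in a full write-up: first, that the contraction $\phi^\varepsilon = (1-\varepsilon)\phi + \varepsilon p$ stays a uniform distance $c\varepsilon$ from $\partial X$ (this uses that $X^\circ$ is star-shaped about $p$ with a quantitative lower bound on $d(p,\partial X)$), so that the mollified subsolution inequality is indeed available along the entire path; second, that $c(\phi^\varepsilon)\to c(\phi)$ requires a little care when $\dot\phi(t)$ lies on a face of $TX(\phi(t))$ for $\phi(t)\in\partial X$, though the semilinear form \eqref{eq:RC2} and continuity of $\Psi$ make this routine.
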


We only state a verification theorem for the target problem; the statement for the source problem is similar.

\begin{thm}\label{thm:VerTarget}
Suppose that $V\colon X \to \R$ is Lipschitz continuous that satisfies \eqref{eq:SolTarget}, and $V(y^\ast) = 0$ for all $y^\ast\in Z$.  
In addition, suppose that for each $x \in X \setminus  Z $, there is a time $T>0$ and a path $\phi \in \Phi_T$ with $\phi(0) = x$ and $\phi(T) \in  Z $ such that $V(x) = c(\phi)$.  
Then, $V$ is the solution to \eqref{eq:TP}.
\end{thm}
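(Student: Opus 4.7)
The plan is to show $V$ coincides with the value function $V_{\rm TP}$ of \eqref{eq:TP} by proving the two inequalities separately, using Theorem~\ref{thm:Target} for one direction and the hypothesized existence of an optimal path for the other.

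For the inequality $V \leq V_{\rm TP}$, I would invoke Theorem~\ref{thm:Target} directly. That theorem identifies $V_{\rm TP}$ as the \emph{maximal} Lipschitz continuous function on $X$ satisfying the subsolution inequality $H(x,-DV(x))\le 0$ a.e.\ on $X^\circ$ together with the boundary condition $V = 0$ on $Z$. Since the hypotheses of Theorem~\ref{thm:VerTarget} are precisely that our $V$ is Lipschitz, satisfies \eqref{eq:SolTarget}, and vanishes on $Z$, the maximality principle immediately gives $V(x) \leq V_{\rm TP}(x)$ for every $x \in X$.

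For the reverse inequality $V \geq V_{\rm TP}$, I would use the third (verification) assumption. Fix $x \in X \setminus Z$ and let $\phi \in \Phi_T$ be the specific path supplied by the hypothesis, so that $\phi(0)=x$, $\phi(T)\in Z$ and $V(x) = c(\phi)$. Since this $\phi$ is a competitor in the infimum defining $V_{\rm TP}(x)$ in \eqref{eq:TP}, we obtain
\[
V_{\rm TP}(x) \;\leq\; c(\phi) \;=\; V(x).
\]
For $x \in Z$, the trivial path $\phi \equiv x$ lies in $\Phi_0$ with $c(\phi)=0$, so $V_{\rm TP}(x)=0=V(x)$. Combining the two inequalities yields $V = V_{\rm TP}$ on $X$, which is the claim.

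There is essentially no hard step here: the argument is a standard verification/comparison, with the heavy lifting already done in Theorem~\ref{thm:Target}. The only point that would warrant a brief remark is the reduction to admissible competitors in \eqref{eq:TP}, namely that the path $\phi$ produced by the hypothesis is indeed of the form required (Lipschitz, with endpoints in $\{x\}$ and $Z$), which is part of the assumption; and the handling of the trivial case $x \in Z$, where one uses a constant path of length zero to ensure $V_{\rm TP}(x)=0$. No PDE-level work (subsolution verification, Perron-type construction, or boundary analysis) is needed in this theorem itself, since all of that is absorbed into the maximality statement of Theorem~\ref{thm:Target}.
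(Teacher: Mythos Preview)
Your argument is correct. Note, however, that the paper does not actually prove Theorem~\ref{thm:VerTarget}: it is stated there as one of the results imported from \cite{STA}, without proof. So there is no ``paper's own proof'' to compare against.

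That said, the two-sided comparison you give is exactly the standard verification argument one would expect: one inequality from the maximality characterization of Theorem~\ref{thm:Target}, the other from the fact that the hypothesized path $\phi$ is admissible in \eqref{eq:TP}. Your handling of the trivial case $x\in Z$ via the constant path in $\Phi_0$ is fine, and no additional PDE work is needed. There is nothing to correct.
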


Hamilton-Jacobi equations with state-constraint boundary conditions were first studied in \cite{Soner}.
See also \cite{CDL, AT}.
The cost function \eqref{eq:PathCost} here does not have a discount factor,  which results in the fact that \eqref{eq:SolSource}  and \eqref{eq:SolTarget} are not monotone in the unknowns.
In general,  \eqref{eq:SolSource}  and \eqref{eq:SolTarget} have many solutions; for example, $0$ is always a solution to both as $H(x,0)=0$ for all $x\in X$.
This naturally leads us to consider the notion of maximal solutions as stated in Theorems \ref{thm:Source} and \ref{thm:Target}.
Since $p \mapsto H(x,p)$ is convex and coercive, it is important emphasizing that, for a Lipschitz continuous function $V$,
$V$ is an almost everywhere subsolution to \eqref{eq:SolSource} if and only if $V$ is its viscosity subsolution (see \cite[Chapter 2]{Tr}).
We chose to state the subsolutions in Theorems \ref{thm:Source} and \ref{thm:Target} in the almost everywhere way to make the statements simpler.

The verification theorem (Theorem \ref{thm:VerTarget}) allows us to make some intelligent guesses to find explicit solutions and optimal paths for both \eqref{eq:SP} and \eqref{eq:TP} in some large deviations in evolutionary game theory problems \cite{STA}.
See also \cite{Ari} for some new applications.

\subsection{Large deviations in coordination games and the logit choice protocol}
We provide here a brief description of large deviations in coordination games and  the logit choice protocol (see \cite{Bl, S-book, SS, STA} for more details).

Let $e_1, e_2, \ldots, e_n$, the standard basis of $\R^n$, be $n$ given equilibria. 
Denote by
\[
X= \left\{x_1 e_1 + x_2 e_2 +\cdots+ x_n e_n\colon x_i \geq 0 \text{ for } 1 \leq i \leq n,  \ \sum_{i=1}^n x_i=1 \right\}.
\]
Then,
\[
Y=TX= \R^n_0 = \left\{u\in \R^n\colon  \sum_{i=1}^n u_i=0\right\}.
\]

Consider a class of Markov chains $\{X^{N,\eta}_k\}_{k=0}^\infty$ parametrized by a population of size $N \in \N$ and a noise level $\eta>0$, which run on discrete grids $\mathcal X^N$ of mesh size $\frac{1}{N}$ in the simplex $X$.
These Markov chains describe the evolution of aggregate behavior in the given population of $N$ strategically interacting agents. 
Each agent adjusts his/her actions over time by following a noisy best response rule, under which the probabilities of choosing suboptimal actions
vanish at exponential rates in $\frac{1}{\eta}$.
Here, each agent chooses action from the common finite action set $\cA=\{1,2,\ldots, n\}$.
The population's aggregate behavior is described by a population state $x \in \mathcal X^N \subset X$, with $x=\sum_{i=1}^n x_i e_i$, where $x_i$ represents the fraction of the population playing strategy $i$.
In the class of games to be discussed, the Markov chains typically approach and then remain near pure states $e_1,\ldots, e_n$ corresponding to strict Nash equilibria, but the ergodicity of these processes ensure that transitions between such states must occur. 
Large deviations results were developed in \cite{SS} to describe the waiting times until and likely paths of transitions between those strict Nash equilibria.
Then, the analysis in \cite{SS, STA} concerns the {\it small noise double limit}, meaning that the noise level $\eta$ is first taken to zero, and then the population size $N$ to infinity.
Large deviation properties of $\{X^{N,\eta}_k\}_{k=0}^\infty$ are described in terms of solutions to optimal control problems with semilinear running costs and state constraints, and these are precisely where Theorems \ref{thm:Source}--\ref{thm:VerTarget} play essential roles.

\medskip

Let us be more specific.
Consider $A \in \R^{n\times n}$, and we use superscripts to refer to rows of $A$, and subscripts to refer to its columns.
More precisely, $A^i$ is the $i$-th row of $A$, $A_j$ is the $j$-th column of $A$, and $A^i_j$ is the $(i,j)$-th entry.
Denote by
\[
A^{i-j} = A^i - A^j = (e_i -e_j)'A, \qquad A^{i-j}_{k-l} = A^i_k - A^i_l - A^j_k + A^j_l = (e_i -e_j)' A(e_k-e_l).
\]
Agents are matched against all opponents to play a symmetric two-player normal form game $A$ given here, with $A^i_j$ is the payoff that an agent playing $i$ obtains when matched against an agent playing $j$.
During such a matching, the payoff obtained by the action $i$ player is $\sum_j A^i_j x_j=A^i x$.
State $x^\ast \in X$ is a {\it Nash equilibrium} of $A$ if all strategies in use at $x^\ast$ are optimal, that is,
\[
A^i x^\ast = \max_{1 \leq j \leq n } A^j x^\ast \qquad \text{ whenever } x_i^\ast >0.
\]
Nash equilibria can be characterized in terms of best-response regions.
For $1\leq i \leq n$, the best-response region for strategy $i$ is defined as
\[
\mathcal{B}^i=\left\{x\in X \colon A^{i-j}x = A^ix - A^jx \geq 0 \qquad \text{ for all } 1\leq j \leq n \right\},
\]
in which action $i$ is optimal.
The set $\mathcal{B}^{ij} = \mathcal{B}^i \cap \mathcal{B}^j$ is the boundary between the best-response regions for strategies $i$ and $j$.
It is clear that $x^\ast$ is a Nash equilibrium if $x^\ast \in \cB^i$ whenever $x_i^\ast >0$.
We study the normal form game $A$, which is always a {\it coordination game}, that is,
\[
A^i_i >A^j_i \qquad \text{ for } i \neq j.
\]
This means that if the opponent plays $i$, then it is best playing $i$ as well.
In coordination games, each pure state $e_i$ is a Nash equilibrium for $1\leq i \leq n$.

In the discrete stochastic model, each agent randomly receives some chances to revise their actions by applying a noisy best response protocol $\sigma^\eta: \R^n \to X^\circ$ with noise level $\eta>0$, a function that maps payoffs vectors in $\R^n$ to probabilities of choosing each action.
For a payoffs vector $\pi \in \R^n$, the probability of choosing to play action $j$ is given by $\sigma^\eta_j(\pi)$.
The {\it logit choice protocol} (see \cite{Bl}) is given by
\[
\sigma^\eta_j(\pi) = \frac{e^{\frac{\pi_j}{\eta}}}{\sum_{k=1}^n e^{\frac{\pi_k}{\eta}}}
\]
The suboptimal actions surely have vanishing probabilities as noise level $\eta \to 0$, which are captured by the unlikelihood function $\Upsilon: \R^n \to [0,\infty)^n$ as
\[
\Upsilon_j(\pi) =-\lim_{\eta \to 0} \eta \log \sigma^\eta_j(\pi)= \max_{1\leq i \leq n} \pi_i - \pi_j,
\]
which is piecewise linear.

Let
\[
\Psi(x) = Ax \qquad \text{ for all } x\in X.
\]
Then $\Upsilon_j(Ax)= \max_{1\leq i \leq n} A^i x -A^j x$.
In particular, for $x\in\cB^i$, $\Upsilon_i(Ax)=0$, and $\Upsilon_j(Ax)=A^{i-j}x$.
As computed in \cite{SS, STA}, the Hamiltonian $H: X \times Y \to \R$ has the following formula
\[
H(x,u) = \max_{i,j} \left( u_j- \Upsilon_j(Ax) - u_i \right) \vee 0.
\]
The value functions are typically piecewise quadratic in the explicitly computable examples (see \cite{SS, STA} and the references therein).

\medskip

Another very important direction concerns with the {\it large population double limit},  meaning that the population size $N$ is first taken to infinity, then the noise level $\eta$ is taken to zero. 
As computed in \cite{SS2}, for each noise level $\eta>0$, as $N \to \infty$, the formula of  the corresponding Hamiltonian $H^\eta: X \times Y \to \R$ is
\[
H^\eta(x,u) = \eta \log \left(\sum_{i,j} x_i e^{\frac{u_j-u_i}{\eta}} \frac{e^{\frac{A^j x}{\eta}}}{\sum_k e^{\frac{A^k x}{\eta}}} \right)
\]
We see that $H^\eta(x,0)=0$ for all $x\in X$, and $u \mapsto H^\eta(x,u)$ is convex.
Moreover, $H^\eta \to H$ locally uniformly in $X^\circ \times Y$ as $\eta \to 0$.
It is worth noting however that we do not have that $H^\eta \to H$ locally uniformly on $X \times Y$ as $\eta \to 0$.
In fact, as we will see, $H^\eta$ behaves badly near the boundary of $X$, and in particular, $H^\eta$ is not uniformly coercive on $X$.
See Section \ref{sec:prep}.

Since $H^\eta$ has quite complicated formula and behavior, value functions corresponding to $H^\eta$ and the optimal paths are typically not tractable when $n\geq 3$.
Besides, it was shown in \cite{SS2} that  the Lagrangian $L^\eta(x,\cdot)$, Legendre's transform of $H^\eta(x,\cdot)$, is unbounded and is discontinuous in certain directions as $x\to \partial X$.

\subsection{Some open problems}
Let 
\[
Z= \ol{X \setminus \cB^1} = \cB^2 \cup \cdots \cup \cB^n,
\]
that is, $Z$ is the closure of $X\setminus \cB^1$, be the target set.
We are concerned with the target problem \eqref{eq:TP} with this given $Z$.
A major open question that \cite{SS, SS2} posed is whether the small noise double limit ($\eta \to 0, N \to \infty$ in this order) and the large population double limit ($N \to \infty, \eta \to 0$ in this order) give the same result or not.
In other words, one is concerned whether large deviations properties under the two orders of limits are identical or not.
See the discussion with further details in \cite[Section 8]{SS}.
For some earlier works for the case $n=2$, see \cite{BiSa, S, S12}.
\medskip

In light of Theorems \ref{thm:Source} and \ref{thm:Target}, we are able to phrase this open question in the PDE language as follows.
For each $\eta>0$, let $V^\eta$ be the maximal locally Lipschitz continuous solution to
\begin{equation}\label{eq:SolTarget-eta}
\begin{cases}
H^\eta(x,-DV^\eta(x)) \leq 0\qquad&\text{for almost all }x \in X^\circ;\\
V^\eta(x^\ast) = 0\qquad&\text{for all }x^\ast \in \target.
\end{cases}
\end{equation}
As $H^\eta(x,0)=0$ for all $x \in X$, we see that $V^\eta \geq 0$.

\begin{quest}\label{Q1}
Let $Z= \ol{X \setminus \cB^1}$. 
Let $V$ be the solution to \eqref{eq:TP}.
For each $\eta>0$, let $V^\eta$ be the maximal locally Lipschitz continuous solution to \eqref{eq:SolTarget-eta}.
As $\eta \to 0$, do we have $V^\eta \to V$ uniformly on $X$?
\end{quest}

As noted above, we only have the convergence of $H^\eta$ to $H$ locally uniformly in $X^\circ \times Y$, but not up to the boundary of $X$. 
When $n \geq 3$, the situation becomes much more complicated because of the topology of $X$.
In particular, one can approach $\partial X$ in various different ways, which make the analysis quite hard and delicate.
This leads us to the idea of restricting the convergence problem to compact subsets of $X^\circ$.
For $r>0$ sufficiently small, denote by
\[
y_i = (1-(n-1)r) e_i  + \sum_{ j\neq i } r e_j \qquad \text{ for each } 1\leq i \leq n.
\]
Let $X_r$ be the convex hull of $\{y_1,y_2,\ldots, y_n\}$.
We write $y_i=y_i(r)$ for $1\leq i \leq n$ if needed to demonstrate the clear dependence on $r$.
We now restrict our PDEs to $X_r$ instead of $X$.

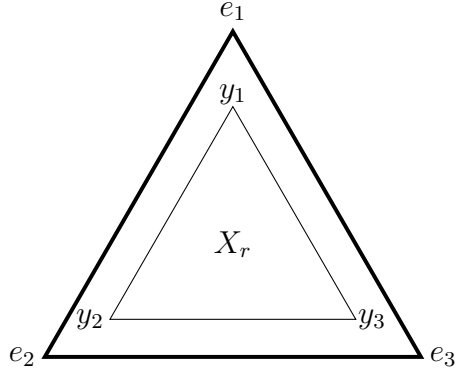
\begin{figure}[h]
\begin{center}
\begin{tikzpicture}
  \draw [line width=1.5pt] (0,0) -- (60:5) -- (5,0) -- cycle;
  \draw  (0.866, 0.5) -- (2.5,3.33) -- (4.134,0.5) -- cycle;
  \draw (2.5, 1.5) node {$X_r$};
 \draw (2.5, 4.6) node {$e_1$};
 \draw (-0.3, 0) node {$e_2$};
 \draw (5.3, 0) node {$e_3$};
 \draw (2.5, 3.5) node {$y_1$};
 \draw (0.6, 0.5) node {$y_2$};
 \draw (4.35, 0.5) node {$y_3$};
 \end{tikzpicture}
\caption{The simplexes $X$ and $X_r$ in case $n=3$}
\end{center}
\end{figure}

Let $V_r$ be the maximal Lipschitz continuous solution to
\begin{equation}\label{eq:SolTarget-r}
\begin{cases}
H(x,-DV_r(x)) \leq 0\qquad&\text{for almost all }x \in X_r^\circ;\\
V_r(x^\ast) = 0\qquad&\text{for all }x^\ast \in \target.
\end{cases}
\end{equation}
For $\eta>0$, let $V^\eta_r$ be the maximal Lipschitz continuous solution to
\begin{equation}\label{eq:SolTarget-eta-r}
\begin{cases}
H^\eta(x,-DV^\eta_r(x)) \leq 0\qquad&\text{for almost all }x \in X_r^\circ;\\
V^\eta_r(x^\ast) = 0\qquad&\text{for all }x^\ast \in \target.
\end{cases}
\end{equation}

\begin{quest}\label{Q2}
Let $Z= \ol{X \setminus \cB^1}$. 
Fix $r>0$ sufficiently small.
Let $V_r$ be the maximal Lipschitz continuous solution to \eqref{eq:SolTarget-r}.
For each $\eta>0$, let $V^\eta_r$ be the maximal Lipschitz continuous solution to \eqref{eq:SolTarget-eta-r}.
As $\eta \to 0$, do we have $V^\eta_r \to V_r$ uniformly on $X_r$?
\end{quest}

\begin{figure}[h]
\begin{center}
\begin{tikzpicture}
\begin{scope}
  \clip  (0,0) -- (60:5) -- (5,0) -- cycle;
  \clip  (-1,3)--(2.3, 1.8)-- (5,2.7)--(5,0)--(0,0)--cycle;
  \fill[gray!20] (-1,3) rectangle (5,0);
\end{scope}
  \draw [line width=0.3pt] (0,0) -- (60:5) -- (5,0) -- cycle;
  \draw [line width=1.5pt] (0.866, 0.5) -- (2.5,3.33) -- (4.134,0.5) -- cycle;
  \draw (2.3, 1.8)-- (2.1,-2);
    \draw (2.3, 1.8)-- (-1,3);
  \draw (2.3, 1.8)-- (5,2.7);
  \draw (2.6, 1.5) node {$X_r$};
    \draw (2.6, 2.6) node {$\cB^1$};
 \draw (2.5, 4.6) node {$e_1$};
 \draw (-0.3, 0) node {$e_2$};
 \draw (5.3, 0) node {$e_3$};
 \end{tikzpicture}
\caption{An example of target set $Z$ (gray region) and $X_r$}
\end{center}
\end{figure}
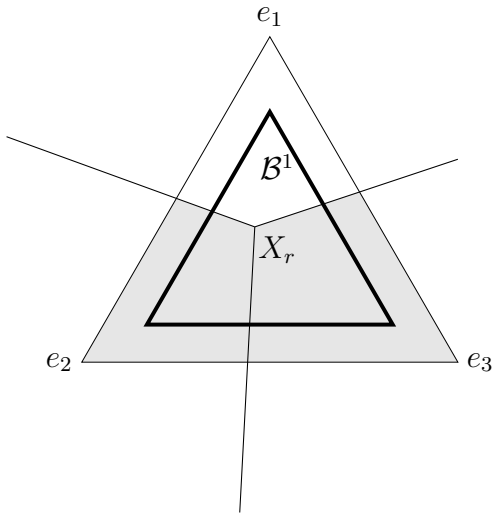
As far as the author is aware of, both Questions \ref{Q1} and \ref{Q2} have not been answered in the literature in general.
In the PDE language,  Questions \ref{Q1} and \ref{Q2} are of selection problem type as the limiting equations \eqref{eq:SolTarget} and \eqref{eq:SolTarget-r}  have many solutions, and it is not clear at all if $\{V^\eta\}$ and $\{V^\eta_r\}$ converge to the corresponding maximal solutions as $\eta \to 0$.

\subsection{Main results}
We first give an affirmative answer to Question \ref{Q1} when $n=2$.
This was already done in \cite{S}, which uses birth-death chain methods to show that in the two-action case, large deviations properties under the two orders of limits are identical.
\begin{thm}\label{thm:logit-2}
Let $n=2$, and $Z= \ol{X \setminus \cB^1} = \cB^2$. 
Let $V$ be the solution to \eqref{eq:TP}.
For each $\eta>0$, let $V^\eta$ be the maximal Lipschitz continuous solution to \eqref{eq:SolTarget-eta}.
Then, $V^\eta \to V$ uniformly on $X$ as $\eta \to 0$.
\end{thm}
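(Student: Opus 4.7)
The plan is to exploit the one-dimensional structure when $n=2$ and compute both $V$ and $V^\eta$ essentially explicitly. Parametrize $X$ by $s = x_1 \in [0,1]$ with $x(s) = (s,1-s)$; the coordination conditions $A^1_1>A^2_1$ and $A^2_2>A^1_2$ give a unique threshold $s^\ast \in (0,1)$ with $\cB^1 = [s^\ast,1]$ and $Z = \cB^2 = [0,s^\ast]$, so $V \equiv V^\eta \equiv 0$ on $[0,s^\ast]$ and only the interval $[s^\ast,1]$ is nontrivial. Write $\Delta(s) := \Upsilon_2(Ax(s)) = A^{1-2}x(s)$, which is linear in $s$, vanishes at $s^\ast$, and is positive on $(s^\ast,1]$. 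Specializing $H(x,u)=\max_{i,j}(u_j - \Upsilon_j(Ax) - u_i)\vee 0$ to $u=(p,-p)\in Y$ with $2p = -V'(s)$, the inequality $H(x,-DV)\le 0$ on $\cB^1$ collapses to $0 \le V'(s) \le \Delta(s)$; the maximal Lipschitz solution with $V(s^\ast)=0$ saturates the upper bound to give
\[
V(s) = \int_{s^\ast}^s \Delta(\tau)\,d\tau.
\]

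The next step is the analogous reduction for the $\eta$-equation. Inserting $u=(-V^{\eta,\prime}/2,\,V^{\eta,\prime}/2)$ into the explicit formula for $H^\eta$ and clearing both the $\log$ and the denominator $e^{A^1 x/\eta}+e^{A^2 x/\eta}$, the condition $H^\eta(x(s),-DV^\eta)\le 0$ becomes
\[
s\,e^{-\Delta(s)/\eta}\big(e^{V^{\eta,\prime}(s)/\eta}-1\big) \;\le\; (1-s)\big(1-e^{-V^{\eta,\prime}(s)/\eta}\big).
\]
Dividing by $1-e^{-V^{\eta,\prime}(s)/\eta}$, whose sign agrees with that of $V^{\eta,\prime}(s)$, collapses this to the single scalar inequality
\[
V^{\eta,\prime}(s) \;\le\; \Delta(s) + \eta\log\tfrac{1-s}{s}\qquad\text{when } V^{\eta,\prime}(s)>0,
\]
together with the mirror bound $V^{\eta,\prime}(s)\ge \Delta(s) + \eta\log\tfrac{1-s}{s}$ when $V^{\eta,\prime}(s)<0$. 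The maximal Lipschitz solution with $V^\eta(s^\ast)=0$ therefore has a.e.\ slope $V^{\eta,\prime}(s) = \big[\Delta(s) + \eta\log\tfrac{1-s}{s}\big]_+$, giving the explicit formula
\[
V^\eta(s) = \int_{s^\ast}^{s}\Big[\Delta(\tau) + \eta\log\tfrac{1-\tau}{\tau}\Big]_+\,d\tau.
\]

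Uniform convergence $V^\eta \to V$ on $X$ then reduces to comparing these two explicit integrals. The correction $\eta\log\tfrac{1-\tau}{\tau}$ is integrable on $(s^\ast,1)$ and contributes at most an $O(\eta)$ uniform discrepancy. The positive-part truncation is active on an interval $[s_\eta,1]$ where $s_\eta$ solves $\Delta(s_\eta) = \eta\log\tfrac{s_\eta}{1-s_\eta}$; since $\Delta(1)>0$ one obtains $1-s_\eta = O(e^{-\Delta(1)/\eta})$, so the missing contribution $\int_{s_\eta}^{1}\Delta(\tau)\,d\tau$ is exponentially small in $1/\eta$. An analogous $O(\eta)$-thick truncation near $s^\ast$ (active only if $s^\ast>\tfrac12$) contributes $O(\eta^2)$ and is absorbed by the same bound. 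Combining these estimates yields $\|V^\eta - V\|_{L^\infty(X)} = O(\eta)$, which is the desired uniform convergence.

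The main technical obstacle is the algebraic collapse of $H^\eta(x,-DV^\eta)\le 0$ to the clean scalar bound $V^{\eta,\prime}(s)\le \Delta(s) + \eta\log\tfrac{1-s}{s}$; this step is what makes the one-dimensional case tractable, and it simultaneously absorbs the otherwise delicate non-coercivity of $H^\eta$ near $e_1$, because the truncation region $[s_\eta,1]$ is exponentially thin in $\eta$. In dimensions $n\ge 3$ such a reduction is unavailable and the boundary analysis near $\partial X$ becomes the substantive difficulty, which is consistent with Questions \ref{Q1} and \ref{Q2} remaining open in full generality.
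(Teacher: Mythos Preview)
Your proof is correct and follows essentially the same approach as the paper: both compute $V$ and $V^\eta$ explicitly by reducing the subsolution constraints to a one-dimensional slope bound on $[s^\ast,1]$, identify the maximal solution as the one saturating this bound, and then compare the two integrals directly. Your presentation is in fact slightly cleaner in two respects: writing $V^{\eta,\prime}(s)=[\Delta(s)+\eta\log\tfrac{1-s}{s}]_+$ with a positive part handles transparently the possible sign change near $s^\ast$ when $s^\ast>\tfrac12$, and you extract an explicit $O(\eta)$ rate rather than the paper's $C\eta + C(1-\delta_\eta)$.
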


It is worth noting that when $n=2$, we are in the one dimensional setting, in which we have explicit formulas for $V^\eta$ and $V$.
We can then utilize these formulas to obtain the convergence result rather straightforwardly.
This is surely not the case for $n\geq 3$.

\medskip

Next, we consider the general case. 
Firstly, we show that \eqref{eq:SolTarget-r} is a good approximation of the target problem \eqref{eq:TP}.

\begin{thm}\label{thm:V-r-to-V}
Let $Z= \ol{X \setminus \cB^1}$, and $V$ be the solution to \eqref{eq:TP}.
For each $r>0$ sufficiently small, let $V_r$ be the maximal Lipschitz continuous solution to \eqref{eq:SolTarget-r}.
As $r \to 0$, $V_r \to V$ locally uniformly in $X^\circ$.
\end{thm}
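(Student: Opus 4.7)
The plan is to combine a one-sided comparison with the method of half-relaxed limits, using as the key enabler the $r$-independent Lipschitz estimate that follows from the coercivity of $H$.

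For the easy inequality $V\le V_r$ on $X_r$, the restriction $V|_{X_r}$ is Lipschitz continuous, satisfies $H(x,-DV(x))\le 0$ for almost every $x\in X_r^\circ\subset X^\circ$, and vanishes on $Z\cap X_r\subset Z$. It is therefore admissible in the supremum defining the maximal solution $V_r$ of \eqref{eq:SolTarget-r}, and the inequality follows at once.

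For the reverse inequality, the coercivity estimate \eqref{eq:coercive} applied to the subsolution inequality yields $|DV_r|\le n\|\Psi\|_\infty$ almost everywhere on $X_r^\circ$, with bound independent of $r$, so the family $\{V_r\}$ is equi-Lipschitz. Let $\bar V(x):=\limsup_{r\to 0,\,y\to x,\,y\in X_r}V_r(y)$ be the upper half-relaxed limit; it inherits the same Lipschitz constant and is therefore Lipschitz on $X$. By the standard stability of viscosity subsolutions under upper half-relaxed limits, applied at any $x\in X^\circ$ (which lies in $X_r^\circ$ for all sufficiently small $r$), $\bar V$ is a viscosity subsolution of $H(x,-D\bar V)\le 0$ on $X^\circ$; convexity of $H(x,\cdot)$ upgrades this to an almost everywhere subsolution, so $\bar V$ belongs to the class considered in Theorem~\ref{thm:Target}. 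For the boundary data, any $x\in Z\cap X^\circ$ satisfies $V_r(x)=0$ for all $r$ small, whence $\bar V(x)=0$. To extend $\bar V=0$ to all of $Z$ it is enough to verify that $Z\cap X^\circ$ is dense in $Z$: if $x\in Z\cap\partial X$, then $x\notin \cB^1$ gives some strict inequality $A^{1-j}x<0$ that persists under small perturbations, so perturbing $x$ by $\delta(e_k-e_l)$ for each vanishing coordinate $x_k=0$ (with $l$ any index where $x_l>0$) produces, for small $\delta>0$, points in $X^\circ\setminus\cB^1\subset Z\cap X^\circ$ tending to $x$. Continuity of $\bar V$ then gives $\bar V=0$ on all of $Z$, and the maximality statement of Theorem~\ref{thm:Target} forces $\bar V\le V$ on $X$.

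Combining the two directions, $V\le V_r$ on $X_r$ and $\limsup_{r\to 0}V_r\le\bar V\le V$ pointwise on $X^\circ$, so $V_r\to V$ pointwise on $X^\circ$, and the equi-Lipschitz bound upgrades this to locally uniform convergence. The main delicate step is not the PDE stability or the Lipschitz estimate but the transfer of the boundary condition $V_r=0$ past the half-relaxed limit: the equality $\bar V=0$ is directly available only on $Z\cap X^\circ$ and must be propagated to the part of $Z$ lying in $\partial X$, which is where the density argument above is needed.
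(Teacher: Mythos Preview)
Your argument follows the same two-sided comparison as the paper: the inequality $V\le V_r$ on $X_r$ comes from $V|_{X_r}$ being admissible for \eqref{eq:SolTarget-r}, while the reverse bound comes from equi-Lipschitz compactness (the paper uses Arzel\`a--Ascoli subsequences, you use half-relaxed limits---equivalent here) together with the maximality of $V$ in Theorem~\ref{thm:Target}. The paper in fact glosses over the boundary condition on $Z\cap\partial X$ that you single out as the delicate step, so your treatment is more explicit.

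One small slip in that density argument: the implication ``$x\in Z\Rightarrow x\notin\cB^1$'' is false, since points on a common face $\cB^{1j}=\cB^1\cap\cB^j$ lie in both $Z$ and $\cB^1$, and for such $x$ there is no strict inequality $A^{1-j}x<0$ to preserve. The easy repair is to first pick $x_n\in X\setminus\cB^1$ with $x_n\to x$ (available by definition of $Z=\ol{X\setminus\cB^1}$), apply your perturbation-into-$X^\circ$ argument to each $x_n$ (for which $x_n\notin\cB^1$ \emph{does} hold), and diagonalize. Alternatively, since each $\cB^j$ is convex with $\cB^j\cap X^\circ\neq\emptyset$, the segment from $x\in\cB^j$ to any interior point of $\cB^j\cap X^\circ$ stays in $\cB^j\cap X^\circ\subset Z\cap X^\circ$ and approaches $x$.
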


This shows that theoretically and also practically, it makes sense to consider the problems in a restricted simplex $X_r$ and pass to the limit if necessary.
It is worth noting that \eqref{eq:TP} keeps all of its characteristics and properties in $X_r$.
For state-constraint Hamilton-Jacobi equations in nested domains, see \cite{CDL, AT, KTT, Tu}.

\smallskip

We now give an affirmative answer to Question \ref{Q2} for all $n \geq 2$.

\begin{thm}\label{thm:main2}
Assume $n \geq 2$, and $Z= \ol{X \setminus \cB^1}$.
Fix $r>0$ sufficiently small.
Let $V_r$ be the maximal Lipschitz continuous solution to \eqref{eq:SolTarget-r}.
For each $\eta>0$, let $V^\eta_r$ be the maximal Lipschitz continuous solution to \eqref{eq:SolTarget-eta-r}.
Then $V^\eta_r \to V_r$ uniformly on $X_r$ as $\eta \to 0$.
\end{thm}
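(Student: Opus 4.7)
The plan is to use half-relaxed limits together with a comparison principle for state-constraint coercive Hamilton-Jacobi equations, exploiting uniform convergence $H^\eta\to H$ on compact subsets of $X_r\times Y$. First, because every $x\in X_r$ satisfies $x_i\ge r>0$, a log-sum-exp expansion yields
\[
H^\eta(x,u) = \max_{i,j}\bigl(u_j - u_i - \Upsilon_j(Ax)\bigr) + R^\eta(x,u), \qquad |R^\eta(x,u)|\le C_R\,\eta,
\]
uniformly for $(x,u)\in X_r\times B_R$ and any $R>0$. Consequently $H^\eta\to H$ locally uniformly on $X_r\times Y$ and $H^\eta$ is uniformly coercive on $X_r$ for small $\eta$, which gives equi-Lipschitz continuity and uniform boundedness of $\{V^\eta_r\}$ on $X_r$. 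Define the half-relaxed limits $\bar V(x)=\limsup_{\eta\to 0,\,y\to x}V^\eta_r(y)$ and $\underline V(x)=\liminf_{\eta\to 0,\,y\to x}V^\eta_r(y)$, both Lipschitz with $\bar V=\underline V=0$ on $Z$.

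For $\bar V\le V_r$: by the convex-coercive viscosity/a.e. equivalence recalled after Theorem~\ref{thm:Target}, each $V^\eta_r$ is a viscosity subsolution of $H^\eta(x,-DV)\le 0$ in $X_r^\circ$; the standard stability of viscosity subsolutions under uniform Hamiltonian convergence shows that $\bar V$ is a viscosity subsolution of $H(x,-D\bar V)\le 0$ in $X_r^\circ$, and the maximality in Theorem~\ref{thm:Target} applied to the restricted problem on $X_r$ yields $\bar V\le V_r$. For the reverse direction $\underline V\ge V_r$ I would invoke the identification of the maximal Lipschitz subsolution of a convex coercive state-constraint problem with the unique state-constraint viscosity solution (subsolution in $X_r^\circ$, state-constraint viscosity supersolution on $\partial X_r\setminus Z$, Dirichlet datum on $Z$), standard following Soner and Capuzzo-Dolcetta-Lions \cite{CDL,AT}. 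Each $V^\eta_r$ is then also a state-constraint supersolution on $X_r^\circ\cup(\partial X_r\setminus Z)$; half-relaxed-limit stability gives that $\underline V$ is a state-constraint supersolution of $H(x,-D\underline V)\ge 0$ there, and the comparison principle for state-constraint coercive Hamilton-Jacobi equations applied to $V_r$ (subsolution) and $\underline V$ (state-constraint supersolution), with matching Dirichlet data on $Z$, gives $V_r\le\underline V$ on $X_r$. Combining, $V_r\le\underline V\le\bar V\le V_r$, so $\bar V=\underline V=V_r$ and $V^\eta_r\to V_r$ uniformly on $X_r$.

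The main obstacle is justifying the state-constraint supersolution property for $V^\eta_r$ and the comparison principle uniformly in $\eta$. This is exactly what the restriction to $X_r$ provides: on $X_r$ the Hamiltonian $H^\eta$ remains uniformly coercive and smooth, so the standard state-constraint machinery—identifying the maximal subsolution with the state-constraint solution via Perron's method, together with the comparison principle via doubling variables—applies with constants independent of $\eta$. On $X$ itself this argument fails because $H^\eta$ degenerates near $\partial X$ and the coercivity needed for comparison breaks down as $\eta\to 0$, which is precisely why Question~\ref{Q2} can be resolved while Question~\ref{Q1} remains open in general.
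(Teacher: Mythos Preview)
Your upper bound $\bar V\le V_r$ via stability of viscosity subsolutions is correct and matches the paper's Proposition~\ref{prop:subseq-conv}. The lower bound, however, has a genuine gap: the comparison principle you invoke does not hold for this class of equations. The problem carries no discount factor, and $H(x,0)=H^\eta(x,0)=0$ for every $x$; the paper emphasizes this explicitly and notes that \eqref{eq:SolTarget} and \eqref{eq:SolTarget-r} have infinitely many solutions. Concretely, the zero function is both a viscosity subsolution and a state-constraint viscosity supersolution of $H(x,-Du)=0$ on $X_r$ with the correct data on $Z$. If your comparison argument were valid, taking $V_r$ as subsolution and $0$ as supersolution would force $V_r\le 0$, contradicting $V_r>0$ on $\mathcal{B}^1\cap X_r$ away from $Z$. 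Thus the identification of the maximal subsolution with the \emph{unique} state-constraint solution, and the ensuing comparison step, both fail here; the standard Soner/Capuzzo-Dolcetta--Lions machinery requires monotonicity in the unknown (a positive discount) that is absent.

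The paper obtains the lower bound by a different, constructive route that exploits the explicit geometry of the zero level sets. For $x\in\mathcal{B}^1$, Lemma~\ref{lem:N} gives $N(x)=\{p:H(x,p)=0\}=\prod_{j\ge 2}[0,A^{1-j}x]$, and a direct computation shows that for any $\theta\in(\tfrac12,1)$ one has $\theta N(x)\subset N^\eta(x)=\{p:H^\eta(x,p)\le 0\}$ once $\eta(\log 2-\log r)\le(1-\theta)\bar\delta$. This inclusion lets one check that the truncated function $\varphi^\theta(x)=\theta(V_r(x)-\delta)_+$ is an a.e.\ subsolution of \eqref{eq:SolTarget-eta-r}, whence $V^\eta_r\ge\varphi^\theta$ by maximality of $V^\eta_r$. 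Sending $\theta\to 1$ and $\delta\to 0$ yields $\liminf V^\eta_r\ge V_r$. In short, the paper replaces the unavailable comparison principle by building, for each small $\eta$, an explicit competitor close to $V_r$ that is admissible for the $\eta$-problem; your proposal does not supply any such mechanism.
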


\subsection*{Organization of the paper}
The paper is organized as follows.
In Section \ref{sec:2}, we give a proof of Theorem \ref{thm:logit-2}.
We study properties of $H^\eta$, $L^\eta$ and $H$ and give some preparation results for the general case in Section \ref{sec:prep}.
The proof of Theorem \ref{thm:main2} is given in Section \ref{sec:main}.
Some conclusions are discussed in Section \ref{sec:con}.

\subsection*{Acknowledgments}
I thank Bill Sandholm for pointing out to me the problem on interchangeability of double limits in large deviations in games and various enlightening discussions.
I dedicate this paper to the memory of Bill with my greatest respect and admiration.
I thank Srinivas Arigapudi for some useful discussions.

\section{The case of two equilibria}\label{sec:2}

We recall the setting here for clarity.
Let $e_1, e_2$ be the standard basis of $\R^2$. 
Then,
\[
X= \left\{x_1 e_1 + x_2 e_2\colon  x_1, x_2 \geq 0,  \ x_1+ x_2=1 \right\}.
\]
For $x,y \in X$, we write $x\leq y$ if $x_1 \leq y_1$.
Of course, $X$ is $1$-dimensional, and we interpret that
\[
Y=TX= \R^2_0 = \left\{u\in \R^2\colon  u_1+u_2=0\right\}.
\]
We write
\[
A^{1-2} = (\alpha,  -\beta)
\]
for some $\alpha, \beta>0$.
Then 
\[
\mathcal{B}^1 = \{ x \in X \colon  A^{1-2}x \geq 0\} = \left\{x \in  X\colon  x_1 \geq \frac{\beta}{\alpha+\beta}\right\}
\]
and
\[
\mathcal{B}^2 = \{ x \in X \colon  A^{2-1}x \geq 0\} = \left\{x \in  X\colon  x_1 \leq \frac{\beta}{\alpha+\beta}\right\}.
\]
The Hamiltonian $H: X \times Y \to \R$ has the following formula
\[
H(x,u) = \max_{i,j} \left( u_j- \Upsilon_j(Ax) - u_i \right) \vee 0.
\]
For $\eta>0$,  the formula of $H^\eta$ is
\[
H^\eta(x,u) = \eta \log \left(\sum_{i,j} x_i e^{\frac{u_j-u_i}{\eta}} \frac{e^{\frac{A^j x}{\eta}}}{\sum_k e^{\frac{A^k x}{\eta}}} \right)
\]

\begin{proof}[Proof of Theorem \ref{thm:logit-2}]
We need to understand about the zero level set of $H$ and zero sublevel set of $H^\eta$,
which give us formulas for $V$ and $V^\eta$.

\medskip

Let $p=u_1-u_2$.
For $x\in \mathcal{B}^1$, 
\begin{align*}
H(x,-u)&= \max \{0, u_2-u_1, u_1-u_2-A^{1-2}x\}\\
&=\max\{0, -p, p - A^{1-2}x\},
\end{align*}
which gives us the explicit form of zero set 
\begin{equation}\label{N-x}
N(x) =\{p \in \R\colon H(x,-u)=0\}= \left[0, {A^{1-2}x}\right].
\end{equation}
By using this and the maximality of $V$, we get that $V(x) = 0$ for $x\in Z$, and, for $x\in \cB^1$,
\begin{equation}\label{V-1d}
V(x)=\int_{\frac{\beta}{\al+\beta}}^{x_1} A^{1-2} (s e_1 + (1-s)e_2)\,ds.
\end{equation}

Next, for $\eta>0$ and $x\in \mathcal{B}^1$, denote by $N^\eta(x)=\{p \in \R\colon  H^\eta(x,-u)  \leq 0\}$.
It is clear that $H^\eta(x,-u) \leq 0$ if and only if
\begin{align*}
&\sum_{i,j} x_i e^{\frac{u_i-u_j}{\eta}} e^{\frac{A^j x}{\eta}} \leq \sum_k e^{\frac{A^k x}{\eta}}\\
\iff \  & x_1 \left( e^{\frac{A^1 x}{\eta}} + e^{\frac{p}{\eta}} e^{\frac{A^2 x}{\eta}} \right)
+x_2 \left( e^{\frac{A^2 x}{\eta}} + e^{\frac{-p}{\eta}} e^{\frac{A^1 x}{\eta}} \right) \leq e^{\frac{A^1 x}{\eta}} + e^{\frac{A^2 x}{\eta}}\\
\iff \  & x_1 e^{\frac{p}{\eta}} e^{\frac{A^2 x}{\eta}} + x_2  e^{\frac{-p}{\eta}} e^{\frac{A^1 x}{\eta}}
\leq x_2 e^{\frac{A^1 x}{\eta}} +x_1 e^{\frac{A^2 x}{\eta}}.
\end{align*}
One point to notice is that the product of two terms on left hand side is equal to that of two terms on the right hand side.
It is clear that the above inequality becomes equality if $p=0$.
The other case that equality happens is when
\begin{align*}
&x_1 e^{\frac{p}{\eta}} e^{\frac{A^2 x}{\eta}}  = x_2 e^{\frac{A^1 x}{\eta}}\\
\iff \ & p =  A^{1-2}x +  \eta  (\log x_2 - \log x_1).
\end{align*}
For each $\eta>0$ small, there exists $\delta_\eta \in (0,1)$ such that
\begin{align*}
&A^{1-2}(\delta_\eta e_1 + (1-\delta_\eta)e_2) +  \eta  (\log (1-\delta_\eta) - \log \delta_\eta)=0\\
\iff \ & (\al+\beta) \delta_\eta +\eta \log \left(\frac{1-\delta_\eta}{\delta_\eta}\right) = \beta,
\end{align*}
and $\lim_{\eta \to 0} \delta_\eta=1$.
Therefore, for $x\in  \mathcal{B}^1$ with $x_1 \leq \delta_\eta$,
\begin{equation*}\label{eq:N-eta}
N^\eta(x) = \left[0, A^{1-2}x +  \eta  (\log x_2 - \log x_1) \right].
\end{equation*}
And, for $x\in  \mathcal{B}^1$ with $x_1 \geq \delta_\eta$,
\begin{equation*}
0 \in N^\eta(x) \subset (-\infty,0].
\end{equation*}
Based on these, we have the following explicit formula for $V^\eta$.
Surely, $V^\eta=0$ on $Z$.
For $x \in \cB^1$ such that $\frac{\beta}{\al+\beta} \leq x_1 \leq \delta_\eta$,
\[
V^\eta(x)= \int_{\frac{\beta}{\al+\beta}}^{x_1} \left(A^{1-2} (s e_1 + (1-s)e_2) +  \eta  (\log (1-s) - \log s) \right)\,ds.
\]
And, $V^\eta(x) = V^\eta(  \delta_\eta e_1 + (1- \delta_\eta) e_2))$ for $ \delta_\eta \leq x_1 \leq 1$.

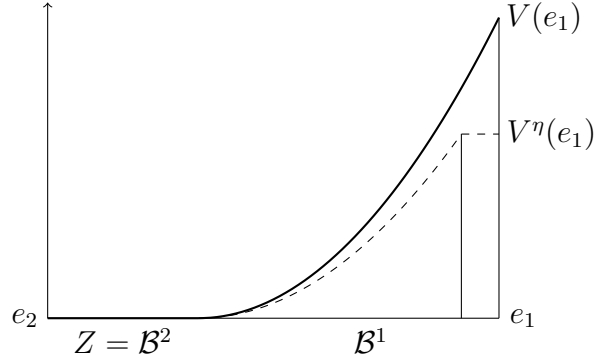
\begin{figure}[h]
\begin{center}
\begin{tikzpicture}
      \draw (0,0) -- (6,0) node[right] {$e_1$};
      \draw[->] (0,0) -- (0,4.2);
      \draw[thick, scale=1,domain=2:6,smooth,variable=\x] plot ({\x},{(\x-2)*(\x-2)/4});
      \draw[thick] (0,0)--(2,0);
      \draw[dashed, scale=1,domain=2:5.5,smooth,variable=\x] plot ({\x},{(\x-2)*(\x-2)/5});
      \draw[dashed] (5.5, 2.45)--(6, 2.45);
      \draw (5.5, 2.45)--(5.5, 0);
      \draw (6,4)--(6, 0);
      \draw (-0.3,0) node {$e_2$};
      \draw (1,-0.3) node {$Z=\cB^2$};
      \draw (4.3,-0.3) node {$\cB^1$};
      \draw (6.7,2.45) node {$V^\eta(e_1)$};
      \draw (6.6,4) node {$V(e_1)$};
 \end{tikzpicture}
\caption{Graph of $V^\eta$ (dashed curve) and $V$ (solid curve)}
\end{center}
\end{figure}

Thus, $V^\eta=V=0$ on $Z$.
For $x \in \cB^1$ such that $\frac{\beta}{\al+\beta} \leq x_1 \leq \delta_\eta$,
\begin{align*}
|V^\eta(x) - V(x)| &= \left| \int_{\frac{\beta}{\al+\beta}}^{x_1}  \eta  (\log (1-s) - \log s) \,ds \right|\\
&= \eta  \left| (-s\log s - (1-s)\log(1-s))\Big|_{s=\frac{\beta}{\al+\beta}}^{x_1} \right| \leq C\eta.
\end{align*}
For $x \in \cB^1$ such that $ \delta_\eta \leq x_1 \leq 1$,
\begin{align*}
&|V^\eta(x) - V(x)| \\
=\,  & \left| V^\eta(  \delta_\eta e_1 + (1- \delta_\eta) e_2)) - V(  \delta_\eta e_1 + (1- \delta_\eta) e_2)) + V(  \delta_\eta e_1 + (1- \delta_\eta) e_2))- V(x) \right|\\
\leq\, & C\eta + C(1-\delta_\eta).
\end{align*}
Hence, we conclude that
\[
\|V^\eta - V\|_\infty  \leq  C\eta + C(1-\delta_\eta) \to 0 \qquad \text{ as } \eta \to 0.
\]
\end{proof}

\begin{rem}
In the above proof, we see that, for $ \delta_\eta \leq x_1 \leq 1$,
\begin{equation*}
\begin{cases}
N^\eta(x) = \left[A^{1-2}x +  \eta  (\log x_2 - \log x_1),0 \right],\\
\lim_{x_1 \to 1^-} \left(A^{1-2}x +  \eta  (\log x_2 - \log x_1)\right) = -\infty.
\end{cases}
\end{equation*}
This shows that $H^\eta$ behaves in a quite singular way as $x_1 \to 1^-$.
In particular, when $x_1=1$, that is, $x=e_1$, we have
\[
H^\eta(e_1,-u)=H^\eta(e_1,p)= \eta \log \left( \frac{e^{\frac{A^1 e_1}{\eta}}+e^{\frac{p}{\eta}}e^{\frac{A^2 e_1}{\eta}}}{e^{\frac{A^1 e_1}{\eta}}+e^{\frac{A^2 e_1}{\eta}}}\right).
\]
It is clear that $H^\eta(e_1,p)>0$ for $p>0$, $H^\eta(e_1,0)=0$, and $H^\eta(e_1,p)<0$ for $p<0$.
Moreover,
\[
\lim_{p \to -\infty} H^\eta(e_1,p)= \eta \log \left( \frac{e^{\frac{A^1 e_1}{\eta}}}{e^{\frac{A^1 e_1}{\eta}}+e^{\frac{A^2 e_1}{\eta}}}\right) \geq \eta \log\left(\frac{1}{2}\right)=-\eta \log 2.
\]
Thus, $H^\eta$ is not coercive on $X$.
A careful computation gives us that
\[
\lim_{\eta \to 0} H^\eta(e_1,-u) = \lim_{\eta \to 0} H^\eta(e_1,p)= \max\{0, p - A^{1-2} e_1\} \neq H(e_1,p).
\]

We only have that, for each  compact subset $X_1$ of $X^\circ$,  $H^\eta$ is uniformly coercive on $X_1$, that is,
\[
\lim_{|p| \to \infty} \inf_{\eta \in (0,1)} \min_{x \in X_1} H^\eta(x,p) = +\infty;
\]
and $H^\eta \to H$ uniformly on $X_1 \times B_R$ as $\eta \to 0$ for each given $R>0$.
\end{rem}

\begin{rem}
It is clear from the above proof that it also gives a proof to Theorem \ref{thm:main2} in case $n=2$.
\end{rem}

\section{The general case -- Preparation steps} \label{sec:prep}
In this section, assumptions of Theorems \ref{thm:V-r-to-V} and \ref{thm:main2} are always in charge.

\subsection{Some analysis on properties of $H^\eta$, $L^\eta$, and $H$}
It is clear that in order to answer Questions \ref{Q1} and \ref{Q2}, we need to have a deeper understanding of $H^\eta$, $L^\eta$, and $H$ in this logit choice protocol.

Recall that $X$ is the convex hull of $\{e_1, e_2, \ldots, e_n\}$, the standard basis of $\R^n$.
For each $r>0$ sufficiently small, $X_r$ is the convex hull of $\{y_1,y_2,\ldots, y_n\}$.
Besides,
\[
Y=\left\{u\in \R^n\colon  \sum_{i=1}^n u_i=0\right\} = \left\{ p\in \R^{n-1} \colon p=(u_2-u_1,\ldots, u_n-u_1) \right\}.
\]
Although it is not standard, we write $p=(p_2,\ldots, p_n)$ to make things consistent in terms of notions, that is, $p_i = u_i-u_1$ for $2\leq i \leq n$.
By abuse of notions, we write $H^\eta(x,u)=H^\eta(x,p)$, $H(x,u)=H(x,p)$.

\begin{lem}
For each $r>0$ sufficiently small,  $H^\eta$ is uniformly coercive on $X_r$, that is,
\begin{equation}\label{H-eta-uniformly-coercive}
\lim_{|p| \to \infty} \inf_{\eta \in (0,1)} \min_{x \in X_r} H^\eta(x,p) = +\infty.
\end{equation}

\end{lem}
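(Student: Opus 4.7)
The plan is to exploit the log-sum-exp structure of $H^\eta$: keeping only one carefully chosen term inside the logarithm already yields a lower bound that grows linearly in $|p|$, with $O(1)$ errors that are uniform in $\eta\in(0,1)$ and $x\in X_r$. Since
$$
H^\eta(x,u)=\eta\log\Bigl(\sum_{i,j}x_i\, e^{(u_j-u_i)/\eta}\frac{e^{A^jx/\eta}}{\sum_k e^{A^kx/\eta}}\Bigr)
$$
depends on $u$ only through the differences $u_j-u_i=p_j-p_i$ (with the convention $p_1=0$), the problem reduces to comparing $\max_{i,j}(p_j-p_i)$ with $|p|$ and controlling everything else.

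First I would establish two uniform estimates on $X_r$. Writing $x=\sum_k\alpha_k y_k$ with $\alpha_k\ge 0$ and $\sum_k\alpha_k=1$, the coordinate formulas for the $y_k$ give $x_i=\alpha_i(1-nr)+r\ge r$ for every $i$, provided $r<1/n$. Setting $M:=2\max_{i,j}|A^i_j|$, the trivial bound $A^jx-A^kx\ge -M$ valid on all of $X$ gives
$$
\frac{e^{A^jx/\eta}}{\sum_k e^{A^kx/\eta}}\ge \frac{1}{n}\,e^{-M/\eta}.
$$
These are the only places where the geometry of $X_r$ and the payoff matrix $A$ enter.

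With these in hand I would pick indices $i^*,j^*$ attaining $\max_{i,j}(p_j-p_i)$, retain only the corresponding term in the sum, and apply the outer $\eta\log$ to obtain
$$
H^\eta(x,p)\ge (p_{j^*}-p_{i^*})-M+\eta\log\tfrac{r}{n}\ge (p_{j^*}-p_{i^*})-M+\log\tfrac{r}{n},
$$
the second inequality using $r/n<1$ and $\eta<1$. Because $p_1=0$ lies in the list $(p_1,\ldots,p_n)$, one has $\max_j p_j-\min_i p_i\ge \max_i|p_i|\ge |p|/(n-1)$ in the $\ell_1$ norm on $\R^{n-1}$, which yields
$$
H^\eta(x,p)\ge \frac{|p|}{n-1}-M+\log\tfrac{r}{n}
$$
uniformly in $x\in X_r$ and $\eta\in(0,1)$, and hence \eqref{H-eta-uniformly-coercive}. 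No step is really the main obstacle; the only slightly subtle point is that the exponential ratio above cannot be bounded below by a positive constant independent of $\eta$, but the unavoidable factor $e^{-M/\eta}$ is exactly absorbed by the outer $\eta\log$, producing the $\eta$-free constant $-M$.
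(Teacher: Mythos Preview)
Your proof is correct and follows the same core idea as the paper: bound $H^\eta$ from below by retaining a single term in the log-sum-exp, use $x_i\ge r$ on $X_r$, and conclude via $\max_{i,j}(p_j-p_i)\ge |p|/(n-1)$.

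The one genuine difference is how you control the denominator $\sum_k e^{A^kx/\eta}$. The paper first restricts to a best-response region, say $x\in\mathcal B^1\cap X_r$, so that $\sum_k e^{A^kx/\eta}\le n\,e^{A^1x/\eta}$; this yields bounds of the form $H^\eta(x,p)\ge p_j-A^{1-j}x+\eta(\log r-\log n)$ and the analogous one for $-p_j$, and then declares the other regions ``analogous.'' You instead use the crude uniform estimate $e^{A^jx/\eta}/\sum_k e^{A^kx/\eta}\ge \tfrac{1}{n}e^{-M/\eta}$ on all of $X$, absorbing the $e^{-M/\eta}$ into an $\eta$-free constant $-M$ via the outer $\eta\log$. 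Your route avoids the case split by best-response region entirely at the price of a slightly worse constant, while the paper's version keeps track of the $A^{1-j}x$ terms explicitly, which dovetails with the zero-level-set analysis used later in the paper. For the purposes of this lemma, your argument is cleaner.
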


\begin{proof}
Recall that
\[
H^\eta(x,u) =H^\eta(x,p)= \eta \log \left(\sum_{i,j} x_i e^{\frac{u_j-u_i}{\eta}} \frac{e^{\frac{A^j x}{\eta}}}{\sum_k e^{\frac{A^k x}{\eta}}} \right).
\]
We only need to consider the case where $x \in \cB^1 \cap X_r$ as other cases can be done analogously.
For each $2 \leq j \leq n$, it is clear that
\[
H^\eta(x,p) \geq \eta  \log \left(x_1 e^{\frac{u_j-u_1}{\eta}} \frac{e^{\frac{A^j x}{\eta}}}{n e^{\frac{A^1 x}{\eta}}} \right)=p_j - A^{1-j}x + \eta(\log x_1 - \log n);
\]
and
\[
H^\eta(x,p) \geq \eta  \log \left(x_j e^{\frac{u_1-u_j}{\eta}} \frac{e^{\frac{A^1 x}{\eta}}}{n e^{\frac{A^1 x}{\eta}}} \right)=-p_j + \eta(\log x_j - \log n).
\]
Therefore, we arrive at
\begin{align*}
H^\eta(x,p) &\geq \max_{2 \leq j \leq n} \left\{p_j - A^{1-j}x + \eta(\log r - \log n), -p_j + \eta(\log r - \log n) \right\}\\
&\geq \frac{1}{n-1} |p| - \max_{2\leq j \leq n} A^{1-j}x + \eta(\log r - \log n),
\end{align*}
which gives \eqref{H-eta-uniformly-coercive}.
\end{proof}

In the above proof, for $x\in \cB^1 \cap X_r$, we really need to use the property that $x_j \geq r$ to have that $\log x_j \geq \log r$, which is important in our uniform coercivity claim.
For Hamilton-Jacobi equations, uniform coercivity yields Lipschitz estimates, which are essential in our analysis.

\medskip

As $x \to e_1$, we have that $x_j \to 0$ for $2\leq j \leq n$, and $\log x_j \to -\infty$.
As a result, we lose uniform coercivity of $H^\eta$ in $X^\circ$ as follows.

\begin{lem} \label{lem:H-eta-e1}
$H^\eta$ is not coercive at $x=e_1$, and is not uniformly coercive in $X^\circ$.
\end{lem}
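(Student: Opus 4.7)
The plan is to exhibit, in each case, an explicit test direction along which $H^\eta$ remains bounded while $|p|$ diverges. The key observation is that the coercivity bound derived in the previous lemma crucially used $x_j \geq r > 0$ for all $j$; what I must show here is that dropping this uniform lower bound really does destroy coercivity, so the restriction to $X_r$ in Theorem \ref{thm:main2} is unavoidable.

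First, for non-coercivity at $x = e_1$, I would specialize the definition of $H^\eta$: since $x_j = 0$ for $j \geq 2$, only the term $i = 1$ contributes to the outer sum, so
\[
H^\eta(e_1, p) = \eta \log\!\left(\frac{e^{A^1 e_1/\eta} + \sum_{j=2}^n e^{(p_j + A^j e_1)/\eta}}{\sum_k e^{A^k e_1/\eta}}\right).
\]
Setting $p_j = -t$ for all $j \geq 2$ and sending $t \to +\infty$, each exponential in the sum over $j \geq 2$ decays to $0$, leaving the finite limit $A^1 e_1 - \eta \log \sum_k e^{A^k e_1/\eta}$. Since $|p| = (n-1)t \to \infty$, this shows $H^\eta(e_1, \cdot)$ is not coercive, generalising the one-dimensional computation in the remark after the proof of Theorem \ref{thm:logit-2}.

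For the second claim, I would approach $e_1$ through the interior by setting
\[
x^\epsilon = (1-(n-1)\epsilon)\, e_1 + \epsilon \sum_{j=2}^n e_j \in X^\circ, \qquad p^\epsilon_j = -t_\epsilon \quad (j = 2,\ldots,n),
\]
with the critical choice $t_\epsilon = \eta \log(1/\epsilon)$, so that $|p^\epsilon| = (n-1)\eta \log(1/\epsilon) \to \infty$ as $\epsilon \to 0^+$. I would then bound the argument of the logarithm in $H^\eta(x^\epsilon, p^\epsilon)$ by grouping the $(i,j)$ terms of the numerator into four families according to whether $i = 1$ or $i \geq 2$, and $j = 1$ or $j \geq 2$; the ``dangerous'' family (those with $i \geq 2$, $j = 1$) produces the factor $x^\epsilon_i \, e^{t_\epsilon/\eta} = \epsilon \cdot (1/\epsilon) = 1$, which exactly balances, while each of the other three families contributes a term of order $\epsilon$ or $1$ relative to $e^{A^1 x^\epsilon/\eta}$. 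Dividing by the denominator $\sum_k e^{A^k x^\epsilon/\eta} \geq e^{A^1 x^\epsilon/\eta}$ then yields $H^\eta(x^\epsilon, p^\epsilon) = O(1)$, contradicting uniform coercivity on $X^\circ$. The main subtlety is the scaling $t_\epsilon$: any slower rate and $|p^\epsilon|$ fails to diverge, any faster rate and the $(i \geq 2, j = 1)$ terms produce genuine coercivity; the value $t_\epsilon = \eta \log(1/\epsilon)$ sits exactly at the critical threshold, mirroring the logarithmic singularity of $L^\eta$ near $\partial X$ noted in the introduction.
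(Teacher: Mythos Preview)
Your treatment of the first claim is essentially identical to the paper's: both specialise $H^\eta$ at $e_1$, take the direction $p_j=-t$ for $j\geq 2$, and observe that the limit as $t\to\infty$ is the finite number $\eta\log\bigl(e^{A^1 e_1/\eta}/\sum_k e^{A^k e_1/\eta}\bigr)\in(-\eta\log n,0)$.

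For the second claim you take a genuinely different route. The paper keeps $s$ and the direction $p=(-1,\ldots,-1)$ \emph{fixed} and lets $x\to e_1$ through the interior, deriving the clean bound
\[
H^\eta(x,sp)\;\leq\;\eta\log\bigl(1+(1-x_1)(e^{s/\eta}-1)\bigr)\;\leq\;(1-x_1)\,\eta\,(e^{s/\eta}-1)\xrightarrow[x\to e_1]{}0,
\]
so that $\inf_{x\in X^\circ}H^\eta(x,sp)\leq 0$ for every $s>0$; uniform coercivity fails immediately. Your diagonal construction with $t_\epsilon=\eta\log(1/\epsilon)$ is correct and the four-family bookkeeping does yield $H^\eta(x^\epsilon,p^\epsilon)\leq\eta\log(n+O(\epsilon))$, but it is more laborious than necessary. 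Note also that your ``critical threshold'' remark is misleading: any choice with $t_\epsilon\to\infty$ and $\epsilon\,e^{t_\epsilon/\eta}=O(1)$ (for instance $t_\epsilon=\sqrt{\log(1/\epsilon)}$) would equally witness the failure of uniform coercivity, so the scaling $t_\epsilon=\eta\log(1/\epsilon)$ is not the unique workable choice. The paper's argument avoids this issue altogether by decoupling the limit in $x$ from the limit in $|p|$, and in return obtains the slightly sharper conclusion $\limsup_{x\to e_1}H^\eta(x,sp)\leq 0$ for each fixed $s$.
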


\begin{proof}
We first show that $H^\eta$ is not coercive at $x=e_1$.
Fix $x=e_1$, and $\bar u = (0,-1,-1,\ldots,-1)$, which means that $p=(-1,-1,\ldots,-1)$.
For $s>0$, we see that
\begin{align*}
H^\eta(e_1, s\bar u ) =H^\eta(e_1,sp)= \eta \log \left (\frac{e^{\frac{A^1 e_1}{\eta}} + \sum_{j>1}e^{\frac{A^j e_1}{\eta}} e^{\frac{-s}{\eta}}}{\sum_k e^{\frac{A^k e_1}{\eta}}}  \right) < \eta \log 1 =0,
\end{align*}
and
\begin{align*}
H^\eta(e_1, s\bar u ) = \eta \log \left (\frac{e^{\frac{A^1 e_1}{\eta}} + \sum_{j>1}e^{\frac{A^j e_1}{\eta}} e^{\frac{-s}{\eta}}}{\sum_k e^{\frac{A^k e_1}{\eta}}}  \right) \geq \eta \log \left (\frac{e^{\frac{A^1 e_1}{\eta}} } {\sum_k e^{\frac{A^k e_1}{\eta}}}  \right) > - \eta \log n.
\end{align*}
Thus, for $s>0$,
\[
 - \eta \log n < H^\eta(e_1, sp )  < 0,
\]
and
\[
 \lim_{s \to \infty} H^\eta(e_1, sp ) =\eta \log \left (\frac{e^{\frac{A^1 e_1}{\eta}} } {\sum_k e^{\frac{A^k e_1}{\eta}}}  \right) \in ( - \eta \log n,0).
\]

Let us show that $H^\eta$ is not uniformly coercive as $x \to \partial X$, which means that the maximal solution $V^\eta$ has complicated behavior near the boundary.
In a similar fashion, for $x \in X$, we can estimate that
\begin{align*}
H^\eta(x, sp) &\leq \eta \log \left(x_1 + \sum_{j>1} x_j e^{\frac{s}{\eta}}\right) \\
&= \eta \log( 1 + (1-x_1)(e^{\frac{s}{\eta}}-1)) \leq (1-x_1)\eta (e^{\frac{s}{\eta}}-1).
\end{align*}
We use the fact that $\log(1+r) \leq r$ for $r\geq 0$ in the last inequality above.
So, clearly, as $x\to e_1$, we have $1-x_1 \to 0$, and thus,
\[
\limsup_{x \to e_1} H^\eta(x, sp) \leq 0.
\]

\end{proof}

\begin{prop}
For each $r>0$ sufficiently small,  $H^\eta \to H$ uniformly  on $X_r \times Y$ as $\eta \to 0$.
However, $H^\eta$ does not converge to $H$ locally uniformly on $X \times Y$ as $\eta \to 0$.
\end{prop}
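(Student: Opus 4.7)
The plan is to handle the two claims separately, exploiting the log-sum-exp structure in the first part and a single bad point in the second.

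For the uniform convergence on $X_r \times Y$, I would rewrite
\[
H^\eta(x,u) \;=\; \eta \log \Bigl(\sum_{i,j} x_i\, e^{(u_j - u_i + A^j x)/\eta}\Bigr) \;-\; \eta \log \Bigl(\sum_k e^{A^k x/\eta}\Bigr)
\]
and apply the elementary two-sided log-sum-exp inequality
\[
\max_k a_k + \eta \log \bigl(\min_k w_k\bigr) \;\leq\; \eta \log \sum_k w_k\, e^{a_k/\eta} \;\leq\; \max_k a_k + \eta \log \Bigl(\sum_k w_k\Bigr),
\]
valid whenever $w_k>0$. On $X_r$ every coordinate satisfies $x_i \geq r$, so the minimum weight in the numerator is at least $r$ and $\sum_{i,j} x_i = n$; the denominator has unit weights. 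Substituting and using $\max_{i,j}(u_j - u_i + A^j x) - \max_k A^k x = \max_{i,j}(u_j - u_i - \Upsilon_j(Ax)) = H(x,u)$ (the $\vee 0$ in the definition of $H$ is automatic, since the diagonal index $i=j$ with $j$ optimal at $x$ already contributes $0$), I would get
\[
H(x,u) + \eta(\log r - \log n) \;\leq\; H^\eta(x,u) \;\leq\; H(x,u) + \eta \log n.
\]
Both error terms are independent of $u \in Y$, yielding uniform convergence on all of $X_r \times Y$ with rate $O(\eta \log(n/r))$.

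For the non-convergence statement, it is enough to exhibit a single point (a singleton is compact) at which even pointwise convergence fails. I would take $x_0 = e_1$ and $u_0 \in Y$ given by $(u_0)_1 = 0$ and $(u_0)_j = -1$ for $j \geq 2$. The coordination hypothesis gives $e_1 \in \cB^1$ and $\Upsilon_1(A e_1) = 0$, so plugging $j=1$ and any $i \geq 2$ into the definition of $H$ yields $H(e_1, u_0) \geq (u_0)_1 - (u_0)_i = 1$, and a routine check of the remaining pairs shows equality. On the other hand, because $(e_1)_i = \delta_{i,1}$, only the row $i=1$ survives in the outer sum and a direct computation gives
\[
H^\eta(e_1, u_0) \;=\; \eta \log \Bigl(\frac{e^{A^1 e_1/\eta} + e^{-1/\eta} \sum_{j \geq 2} e^{A^j e_1/\eta}}{\sum_k e^{A^k e_1/\eta}}\Bigr),
\]
which is trapped between $\eta \log \bigl(e^{A^1 e_1/\eta}/\sum_k e^{A^k e_1/\eta}\bigr)$ and $0$; both bounds tend to $0$ as $\eta \to 0$ since the coordination hypothesis forces $A^1 e_1 = \max_k A^k e_1$. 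Hence $H^\eta(e_1, u_0) \to 0 \neq 1 = H(e_1, u_0)$, so locally uniform convergence on $X \times Y$ cannot hold.

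The first part is a mechanical consequence of log-sum-exp once the rewrite is set up; the only subtlety is noticing that the $\vee 0$ in $H$ is redundant, so the two sides really do combine cleanly. The main conceptual step is in the second part: one must recognize which feature of $H^\eta$ degenerates at the boundary $\partial X$. As already signaled by Lemma \ref{lem:H-eta-e1} and the remark following the proof of Theorem \ref{thm:logit-2}, at a vertex $e_1$ the outer sum collapses to a single row, killing precisely the cross-term ($j=1$, $i \geq 2$) that produces $H(e_1, u_0) = 1$ in the limit; once this is seen, choosing $u_0$ to activate that asymmetry is the only remaining input.
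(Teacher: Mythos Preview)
Your proposal is correct and follows essentially the same route as the paper. Your two-sided log-sum-exp inequality reproduces exactly the paper's bound $|H^\eta - H|\le \eta(\log n - \log r)$ on $X_r\times Y$; the paper obtains the same inequalities by extracting individual terms from the sum by hand within a fixed best-response region, whereas your packaging avoids the case split and handles numerator and denominator uniformly. For the failure of convergence the paper simply invokes Lemma~\ref{lem:H-eta-e1}; your direct computation at $(e_1,u_0)$ with $p=(-1,\ldots,-1)$ uses the same point and direction but exhibits the pointwise discrepancy explicitly. One cosmetic slip: your $u_0=(0,-1,\ldots,-1)$ does not literally lie in $Y=\{\sum u_i=0\}$, but since $H$ and $H^\eta$ depend only on the differences $u_j-u_i$ this is harmless---and the paper commits the same abuse in Lemma~\ref{lem:H-eta-e1}.
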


\begin{proof}
Fix $r>0$ sufficiently small.
We only need to consider the case where $x \in \cB^1 \cap X_r$ as other cases can be done analogously.
For each $i,j \in \{1,2,\ldots,n\}$, it is clear that
\begin{align*}
H^\eta(x,u) &= \eta \log \left(\sum_{i,j} x_i e^{\frac{u_j-u_i}{\eta}} \frac{e^{\frac{A^j x}{\eta}}}{\sum_k e^{\frac{A^k x}{\eta}}} \right) \\
& \geq \eta  \log \left(x_i e^{\frac{u_j-u_i}{\eta}} \frac{e^{\frac{A^j x}{\eta}}}{n e^{\frac{A^1 x}{\eta}}} \right)=u_j-u_i - A^{1-j}x + \eta(\log x_i - \log n)\\
&\geq u_j-u_i - A^{1-j}x + \eta(\log r - \log n).
\end{align*}
In particular, for $i=j=1$, we see that $H^\eta(x,u) \geq \eta(\log r - \log n)$.
Thus, take maximum over $i,j \in \{1,2,\ldots,n\}$ in the above inequalities to deduce that
\begin{equation}\label{bound-1}
H^\eta(x,u) \geq H(x,u) + \eta(\log r - \log n).
\end{equation}
To get the other bound, we assume that $H(x,u) = u_k - u_l - A^{1-k} x$ for some $k,l \in \{1,2,\ldots,n\}$.
This means that, for all $i,j \in \{1,2,\ldots,n\}$,
\[
u_k - u_l + A^k x \geq u_j - u_i + A^j x.
\]
Therefore,
\begin{align*}
H^\eta(x,u) &= \eta \log \left(\sum_{i,j} x_i e^{\frac{u_j-u_i}{\eta}} \frac{e^{\frac{A^j x}{\eta}}}{\sum_k e^{\frac{A^k x}{\eta}}} \right) \\
& \leq \eta  \log \left( \sum_{i,j} x_i e^{\frac{u_k-u_l}{\eta}} \frac{e^{\frac{A^k x}{\eta}}}{e^{\frac{A^1 x}{\eta}}} \right)=u_k-u_l - A^{1-k}x + \eta  \log n\\
&= H(x,u)+ \eta \log n.
\end{align*}
Combine this with \eqref{bound-1}, we arrive at
\begin{equation}\label{bound-2}
\|H^\eta - H\|_{L^\infty(X_r \times Y)}  \leq \eta (|\log r| + \log n)= \eta ( \log n - \log r).
\end{equation}
Therefore, $H^\eta \to H$ uniformly  on $X_r \times Y$ as $\eta \to 0$.
However, $H^\eta$ does not converge to $H$ locally uniformly on $X \times Y$ as $\eta \to 0$ thanks to Lemma \ref{lem:H-eta-e1}.
\end{proof}

\smallskip

We also include here a different proof showing that $L^\eta(x,\cdot)$ blows up as $x\to \partial X$ in certain directions (see inequality (31) in \cite{SS2}).

\begin{lem}\label{lem:L-eta}
For $x\in X^\circ$ and $i \neq j$,
\[
L^\eta(x,e_j -  e_i) \geq -\eta \log x_i - \eta \log 2.
\]
In particular, $L^\eta(x,e_j -e_i) \to +\infty$ as $x_i \to 0$.
\end{lem}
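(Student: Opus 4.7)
The plan hinges on a factorization that decouples the two instances of $u$ in the definition of $H^\eta(x,u)$. Setting $q_j := e^{A^j x/\eta}/\sum_k e^{A^k x/\eta}$, the double sum splits as a product of single sums,
\[
\sum_{i,j} x_i \, q_j \, e^{(u_j - u_i)/\eta} \;=\; \Bigl(\sum_i x_i\, e^{-u_i/\eta}\Bigr)\Bigl(\sum_j q_j\, e^{u_j/\eta}\Bigr).
\]
Writing $f(u) := \sum_k x_k e^{-u_k/\eta}$ and $g(u) := \sum_l q_l e^{u_l/\eta}$, this gives $H^\eta(x,u) = \eta \log f(u) + \eta \log g(u)$, which is manifestly invariant under shifts $u \mapsto u + c \mathbf{1}$ (the factors $e^{c/\eta}$ and $e^{-c/\eta}$ cancel in the product). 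Since $e_j - e_i \in Y$ is also sum-zero, the pairing $u \cdot (e_j - e_i)$ is shift-invariant as well, so that for any $u \in \R^n$,
\[
L^\eta(x, e_j - e_i) \;\geq\; u \cdot (e_j - e_i) \,-\, \eta \log f(u) \,-\, \eta \log g(u),
\]
without needing to enforce $\sum_k u_k = 0$.

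The next step is to choose a well-tuned test vector. Take $u = -N\, e_i$ with $N := -\eta \log x_i \geq 0$, which is non-negative because $x \in X^\circ$ forces $x_i \leq 1$. Then $u\cdot(e_j - e_i) = N = -\eta \log x_i$, while the identity $x_i e^{N/\eta} = x_i \cdot x_i^{-1} = 1$ produces
\[
f(u) \,=\, x_i e^{N/\eta} + \sum_{k \neq i} x_k \,=\, 1 + (1 - x_i) \,=\, 2 - x_i \,\leq\, 2,
\]
and trivially $g(u) = q_i e^{-N/\eta} + \sum_{l \neq i} q_l \leq \sum_l q_l = 1$. Therefore $H^\eta(x,u) \leq \eta \log 2$, and substituting back yields the desired bound $L^\eta(x, e_j - e_i) \geq -\eta \log x_i - \eta \log 2$. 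The blow-up claim $L^\eta(x, e_j - e_i) \to +\infty$ as $x_i \to 0^+$ is then an immediate consequence.

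There is no real obstacle once the factorization is spotted: the whole argument reduces to a one-line construction of a test vector. The only mildly subtle point is recognizing that the shift invariance of $H^\eta(x,\cdot)$ allows one to replace the definitional sup over $Y$ by a sup over $\R^n$, which frees us from having to use the orthogonal projection of $-N e_i$ onto $Y$ and instead lets us pick a test vector supported on a single coordinate, making both $f(u)$ and $g(u)$ trivial to estimate.
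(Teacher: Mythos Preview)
Your proof is correct. The factorization
\[
\sum_{i,j} x_i\, q_j\, e^{(u_j-u_i)/\eta} \;=\; \Bigl(\sum_i x_i e^{-u_i/\eta}\Bigr)\Bigl(\sum_j q_j e^{u_j/\eta}\Bigr)
\]
is valid, the shift-invariance argument allowing you to take test vectors in $\R^n$ rather than $Y$ is sound (both $u\cdot(e_j-e_i)$ and $H^\eta(x,u)$ are invariant under $u\mapsto u+c\mathbf{1}$), and the choice $u=-N e_i$ with $N=-\eta\log x_i$ gives exactly the claimed bound.

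Your route differs from the paper's in a genuine way. The paper does not factorize; instead it uses the cruder bound $q_j\le 1$ to obtain $H^\eta(x,u)\le \eta\log\bigl(\sum_{i,j} x_i e^{(u_j-u_i)/\eta}\bigr)$, then plugs in the test vector $u_1=k$, $u_2=-k$, $u_l=0$ for $l>2$ (which already lies in $Y$), and lets $k\to\infty$ to kill the lower-order terms before reading off the bound. Your factorization buys you a sharper upper bound on $H^\eta$ along a well-chosen direction, which lets you pick a single \emph{finite} test vector and avoids any limiting argument; the computation is shorter and more transparent. The paper's argument is more elementary in that it never needs to observe the product structure, but it pays for this with an asymptotic step and a somewhat ad hoc estimate $\sum_{i,j} x_i e^{(u_j-u_i)/\eta} < 2x_2 e^{2k/\eta}$ valid only for $k$ large. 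Both arrive at the identical inequality.
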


\begin{proof}
Without loss of generality, we consider $i=2, j=1$.
By the Legendre transform,
\begin{align*}
L^\eta(x,e_1-e_2) &= \sup_{u \in Y} ( (e_1- e_2)\cdot u - H^\eta(x,u))\\
&\geq  \sup_{u \in Y} \left( (u_1- u_2)  - \eta \log \left( \sum_{i,j}  x_i e^{\frac{u_j-u_i}{\eta}} \right) \right)
\end{align*}
Pick $u_1=k$, $u_2=-k$, and $u_j=0$ for $j >2$.
For $k >0$ sufficiently large depending on $x$, it is clear that
\[
\sum_{i,j}  x_i e^{\frac{u_j-u_i}{\eta}} \leq x_2 e^{\frac{2k}{\eta}} + \sum_{i,j}  x_i e^{\frac{k}{\eta}}=x_2 e^{\frac{2k}{\eta}} +n  e^{\frac{k}{\eta}} <2 x_2 e^{\frac{2k}{\eta}}.
\]
Thus, for $k >0$ sufficiently large,
\[
L^\eta(x,e_1-e_2) \geq 2k - \eta \log \left(2 x_2 e^{\frac{2k}{\eta}} \right) = -\eta \log(2x_2) = -\eta \log x_2 - \eta \log 2.
\]
\end{proof}
Denote by $c^\eta(x,y)$ the minimum cost of traveling from $x$ to $y$ using the Lagrangian $L^\eta$, that is,
\begin{multline*}
c^\eta(x,y) = \inf\Big\{ \int_0^T L^\eta(\phi(s),\dot \phi(s))\,ds \colon \phi \in \Phi_T\text{ for some }T \geq 0, \\
 \phi(0) = x, \phi(T) =y  \Big\}.
\end{multline*}

\begin{lem}
For $0<\eta<1$ and $0<r<\frac{1}{2(n-1)}$ sufficiently small, there exists a constant $C>0$ depending only on $n$ and $A$ such that
\[
c^\eta(e_1,y_1) = c^\eta(e_1, y_1(r)) \leq Cr.
\]
\end{lem}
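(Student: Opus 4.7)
The plan is to choose an explicit short linear path from $e_1$ to $y_1(r)$ and to bound its $L^\eta$-cost uniformly in $\eta \in (0,1)$. Specifically, I set $T := 2(n-1)r$ and $\phi(t) := e_1 + t(y_1(r)-e_1)/T$ for $t \in [0,T]$, so that $\phi(t)_1 = 1 - t/2$, $\phi(t)_j = t/(2(n-1))$ for $j>1$, and the constant velocity $v := (y_1(r)-e_1)/T$ satisfies $v_1 = -1/2$, $v_j = 1/(2(n-1))$. For $r$ small enough (depending only on $n$ and $A$), the coordination hypothesis $A^1_1 > A^k_1$ ensures $\phi(t) \in \cB^1 \cap X^\circ$ for $t \in (0,T]$, and one also checks $\phi(t) + v \in X$ throughout.

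The key ingredient is a Kullback--Leibler upper bound for $L^\eta$. Exploit the factorization
\[
H^\eta(x,u) \;=\; \eta \log \sum_i x_i e^{-u_i/\eta} \;+\; \eta \log \sum_j \pi_j(x) e^{u_j/\eta},
\qquad \pi_j(x) := \frac{e^{A^j x/\eta}}{\sum_k e^{A^k x/\eta}},
\]
and apply Jensen's inequality to the sum $\sum_{i,j} x_i \pi_j(x) e^{(u_j-u_i)/\eta}$ against the reference product law $(b_i a_j)$ with arbitrary $a,b \in X$ to obtain
\[
H^\eta(x,u) \;\geq\; u\cdot(a-b) \;-\; \eta\,\mathrm{KL}(a\,\|\,\pi(x)) \;-\; \eta\,\mathrm{KL}(b\,\|\,x).
\]
Passing to the supremum in $u$ with the choice $b=x$ and $a=x+v$ (valid as soon as $x+v \in X$) yields the bound
\[
L^\eta(x,v) \;\leq\; \eta\,\mathrm{KL}(x+v \,\|\, \pi(x)).
\]

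Evaluating this bound along the path, set $a(t) := \phi(t) + v$. Expanding the KL divergence via $\log \pi_i(x) = A^i x/\eta - \log Z(x)$ with $Z(x) := \sum_k e^{A^k x/\eta}$, dropping the nonpositive entropy term $\eta\sum_i a(t)_i \log a(t)_i$, and using $\eta \log Z(\phi(t)) \leq A^1 \phi(t) + \eta \log n$ on $\cB^1$, the middle term combines with $A^1\phi(t)$ to give
\[
L^\eta(\phi(t),\dot\phi(t)) \;\leq\; \sum_{i>1} a(t)_i\, A^{1-i}\phi(t) \;+\; \eta \log n \;\leq\; \frac{1+t}{2}\,M \;+\; \eta \log n,
\]
where $M := \max_{i>1,\,x\in X} A^{1-i}x$ depends only on $A$, and I have used $1 - a(t)_1 = (1+t)/2$. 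Integrating over $[0,T]$ with $T = 2(n-1)r \leq 1$ and $\eta \leq 1$ gives
\[
c^\eta(e_1,y_1(r)) \;\leq\; \int_0^T L^\eta(\phi(t),\dot\phi(t))\, dt \;\leq\; T(M+\log n) \;=\; 2(n-1)(M+\log n)\,r,
\]
which is the desired bound with $C = 2(n-1)(M+\log n)$ depending only on $n$ and $A$. The main obstacle is producing a KL bound that remains uniform as $\eta \to 0$ and as $x$ touches $\partial X$ at $e_1$, since the standard coercivity tools fail there: $H^\eta$ loses coercivity at $e_1$ (Lemma~\ref{lem:H-eta-e1}) and $L^\eta$ blows up in certain directions (Lemma~\ref{lem:L-eta}). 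What rescues the argument is that the chosen path stays in $\cB^1$, so that $A^1 x$ is the largest payoff, $\eta \log Z$ is absorbed by $-\sum a_i A^i x$, and the remaining bounded integrand is only integrated over an interval of length $O(r)$.
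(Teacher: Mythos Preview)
Your proof is correct, but the route differs from the paper's. The paper bounds $H^\eta(x,u)$ from below by the single term $\eta\log\big(x_1 e^{(u_j-u_1)/\eta} e^{A^j x/\eta}/(n e^{A^1 x/\eta})\big)$ in the log-sum, takes the Legendre transform to obtain $L^\eta(x,e_j-e_1)\le A^{1-j}x+\eta(\log n-\log x_1)$ for each $j>1$, and then averages via convexity of $L^\eta(x,\cdot)$ to control $L^\eta$ along the direction $\bar v=\frac{1}{n-1}\sum_{j>1}(e_j-e_1)$. You instead use Jensen's inequality with an arbitrary reference product measure $(b_i a_j)$ to obtain the Donsker--Varadhan type bound $H^\eta(x,u)\ge u\cdot(a-b)-\eta\,\mathrm{KL}(a\|\pi(x))-\eta\,\mathrm{KL}(b\|x)$, and then set $b=x$, $a=x+v$ to get $L^\eta(x,v)\le \eta\,\mathrm{KL}(x+v\|\pi(x))$ in one stroke. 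The paper's argument is more elementary and leaves an $\eta\log x_1^{-1}$ term that must be controlled by keeping $x_1\ge 1-(n-1)r>1/2$ along the path; your KL bound with $b=x$ eliminates that term automatically, at the price of requiring $x+v\in X$, which forces the half-speed reparametrization $v=\bar v/2$. Your approach also makes the information-theoretic structure of the logit Lagrangian transparent and yields an explicit constant $C=2(n-1)(M+\log n)$ with $M=\max_{i>1,\,x\in X}A^{1-i}x$.
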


\begin{proof}
For each $x\in \cB^1$ and $2\leq j \leq n$, by the Legendre transform,
\begin{align*}
L^\eta(x,e_j-e_1) &= \sup_{u \in Y} ( (e_j- e_1)\cdot u - H^\eta(x,u))\\
&\leq  \sup_{u \in Y} \left( (u_j- u_1)  - \eta \log \left( x_1 e^{\frac{u_j-u_1}{\eta}} \frac{e^{\frac{A^j x}{\eta}}}{n e^{\frac{A^1 x}{\eta}}} \right ) \right)\\
&=A^{1-j}x + \eta (\log n- \log x_1).
\end{align*}
Let $\bar v = (e_2+\cdots + e_n)/(n-1) - e_1$.
By convexity of $L^\eta(x,\cdot)$,
\begin{align*}
L^\eta(x,\bar v) &\leq \frac{1}{n-1} \sum_{j=2}^n L^\eta(x,e_j-e_1) \leq \frac{1}{n-1} \sum_{j=2}^n A^{1-j}x + \eta (\log n- \log x_1)\\
&\leq A^1 x+ \eta (\log n- \log x_1).
\end{align*}
Denote by
\[
\gam(s) = e_1 + s \bar v \qquad \text{ for } s \in [0, (n-1)r].
\]
Then, $\gam(0) = e_1$, $\gam((n-1)r)=y_1=y_1(r)$, and
\begin{align*}
c^\eta(e_1, y_1(r)) &\leq \int_0^{(n-1)r} L^\eta(\gam(s),\dot \gam(s))\,ds \\
&\leq (n-1)r \left(\max_{x\in \cB^1} A^1x \right) + \eta r (n-1)\left(\log n - \log (1-(n-1)r) \right)\\
&\leq C(1+\eta) r \leq Cr
\end{align*}
\end{proof}
Thus, $c^\eta(e_1, y_1(r)) \leq Cr$, which means that the cost of transitioning from $e_1$ to $y_1(r)$ is small enough of order $O(r)$, and is vanishing as $r \to 0$.
This gives another evident that it makes sense to consider the problem in the restricted simplex $X_r$ in place of $X$.

\medskip

We now study about the zero level set of $H$, which is essential in our analysis later.
\begin{lem}\label{lem:N}
Fix $x\in \mathcal{B}^1$, and denote by 
\[ 
N(x)=\{u \in Y \colon  H(x,u)=0\} =\{p\in \R^{n-1} \colon H(x,p)=0\}.
\]
Then,
\[
N(x) = [0,A^{1-2}x]\times\cdots \times [0,A^{1-n} x].
\]
\end{lem}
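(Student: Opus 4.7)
The plan is to unfold the formula for $H(x,\cdot)$ using the explicit form of $\Upsilon$ on $\mathcal{B}^1$, translate the condition $H(x,p)=0$ into a finite list of linear inequalities on the coordinates $p_2,\ldots,p_n$, and then show that most of those inequalities are redundant, leaving only the box-constraints $0 \leq p_j \leq A^{1-j}x$.

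First I would recall that for $x \in \mathcal{B}^1$ the action $1$ is optimal, so $\Upsilon_1(Ax)=0$ and $\Upsilon_j(Ax)=A^{1-j}x \geq 0$ for $j\geq 2$. Substituting into
\[
H(x,u) = \max_{i,j} \bigl(u_j - \Upsilon_j(Ax) - u_i\bigr) \vee 0,
\]
and using the change of variables $p_j = u_j - u_1$ (with the convention $p_1 = 0$), the condition $H(x,p) = 0$ becomes
\[
p_j - p_i - A^{1-j}x \leq 0 \qquad \text{for every } i,j \in \{1,\ldots,n\},
\]
since $H$ is nonnegative by the $\vee 0$ and vanishes precisely when the inner maxima are nonpositive.

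Next I would split this family of inequalities into three cases. Taking $i=1$ gives $p_j \leq A^{1-j}x$ for all $j$, which is trivial at $j=1$ and gives the upper bounds $p_j \leq A^{1-j}x$ for $j\geq 2$. Taking $j=1$ gives $-p_i \leq 0$, i.e.\ $p_i \geq 0$ for all $i$, which is trivial at $i=1$ and gives the lower bounds $p_i \geq 0$ for $i\geq 2$. In the remaining case $i,j \geq 2$, the inequality reads $p_j - p_i \leq A^{1-j}x$; assuming the box constraints already established, $p_j \leq A^{1-j}x$ and $-p_i \leq 0$, so adding them yields the desired inequality for free. Hence the cases $i,j\geq 2$ are automatically implied by the $i=1$ and $j=1$ cases.

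Combining the two remaining families, $H(x,p)=0$ if and only if $0 \leq p_j \leq A^{1-j}x$ for every $j=2,\ldots,n$, which is exactly the product $[0,A^{1-2}x] \times \cdots \times [0,A^{1-n}x]$. No real obstacle is expected here; the only subtlety is the redundancy argument in the $i,j\geq 2$ case, which is essentially the triangle-style observation that two one-sided bounds on the individual coordinates automatically control their differences. This clean product description is what will feed into the subsequent construction and selection arguments for $V_r$.
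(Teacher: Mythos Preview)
Your proof is correct and follows essentially the same approach as the paper: both unfold $H(x,u)$ on $\mathcal{B}^1$ using $\Upsilon_j(Ax)=A^{1-j}x$, split the inequalities $u_j - A^{1-j}x - u_i \leq 0$ into the three cases $i=1$, $j=1$, and $i,j\geq 2$, and observe that the last case is redundant given the first two. If anything, your write-up is slightly more explicit than the paper's about why the mixed case $i,j\geq 2$ follows from the box constraints.
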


\begin{proof}
For $x\in \mathcal{B}^1 \cap X_r$, we have
\begin{align*}
H(x,u) &= \max_{i,j} \left( u_j- \Upsilon_j(Ax) - u_i \right) \vee 0\\
&= \max_{i,j} \left( u_j- A^{1-j}x - u_i \right) \vee 0.
\end{align*}
For $i=1$ and $j>1$, we see that 
\[
u_j- A^{1-j}x - u_1 = p_j - A^{1-j}x.
\]
For $j=1$ and $i>1$, we have that
\[
u_1- A^{1-1}x - u_i = -p_i.
\]
For $i,j >1$,
\[
u_j- A^{1-j}x - u_i  = (u_j-u_1 - A^{1-j}x) - (u_i - u_1 ) = p_j - A^{1-j} x - p_i.
\]
We use the three identities above to deduce that
\[
N(x)= \left\{ p \in \R^{n-1} \colon  p_2 \in [0,A^{1-2}x], \ldots, p_n \in [0,A^{1-n}x] \right\}.
\]

\end{proof}

\subsection{Preparation results}

\begin{prop}\label{prop:subseq-conv}
Fix $r>0$ sufficiently small.
There exists a sequence $\{\eta_k\} \to 0$ such that $V^{\eta_k}_r \to \bar V$ locally uniformly in $X^\circ$, where $\bar V$ solves \eqref{eq:SolTarget-r}.
\end{prop}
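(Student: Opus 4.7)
The plan is to produce the limit $\bar V$ by Arzel\`a--Ascoli and then pass to the limit in the inequality $H^\eta(x,-DV^\eta_r)\leq 0$ using the uniform convergence $H^\eta\to H$ on $X_r\times Y$ established in this section. First, I would obtain an equi-Lipschitz bound on $\{V^\eta_r\}_{\eta\in(0,1)}$ with a constant independent of $\eta$. The uniform coercivity \eqref{H-eta-uniformly-coercive}, which gives $H^\eta(x,p)\geq \frac{|p|}{n-1}-C$ on $X_r$ with $C$ independent of $\eta\in(0,1)$, forces $|DV^\eta_r(x)|\leq (n-1)C$ at almost every $x\in X_r^\circ$ where $V^\eta_r$ is differentiable, and hence a Lipschitz constant for $V^\eta_r$ on the convex set $X_r$ that is uniform in $\eta$. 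Combined with $V^\eta_r\equiv 0$ on $Z$ and $V^\eta_r\geq 0$ (since $H^\eta(x,0)=0$ makes the zero function an a.e.\ subsolution satisfying the boundary condition, while $V^\eta_r$ is the maximal such function), this yields the two-sided bound $0\leq V^\eta_r(x)\leq (n-1)C\,\dist(x,Z)$ on $X_r$. Arzel\`a--Ascoli then supplies a sequence $\eta_k\to 0$ and a Lipschitz function $\bar V\colon X_r\to\R$ with $V^{\eta_k}_r\to\bar V$ uniformly on $X_r$.

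The next step is to verify that $\bar V$ satisfies \eqref{eq:SolTarget-r}. Each $V^{\eta_k}_r$ is a Lipschitz a.e.\ subsolution of $H^{\eta_k}(x,-Du)\leq 0$ in $X_r^\circ$; by convexity and coercivity of $H^{\eta_k}(x,\cdot)$ this is the same as being a viscosity subsolution, as recalled just after Theorem \ref{thm:Target}. The uniform estimate $\|H^\eta-H\|_{L^\infty(X_r\times Y)}\leq \eta(\log n-\log r)$ together with the uniform convergence of the value functions is precisely the hypothesis of the standard Barles--Perthame stability theorem for viscosity subsolutions, so $\bar V$ is a viscosity subsolution of $H(x,-D\bar V)\leq 0$ in $X_r^\circ$; this translates back to the a.e.\ inequality in \eqref{eq:SolTarget-r} by convexity of $H(x,\cdot)$ and the Lipschitz regularity of $\bar V$. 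The boundary condition $\bar V=0$ on $Z$ passes to the limit at once from uniform convergence.

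The only real obstacle is the uniform Lipschitz estimate, and it relies in an essential way on restricting to $X_r$: Lemma \ref{lem:H-eta-e1} shows that $H^\eta$ is not even coercive at $e_1$, let alone uniformly coercive on $X^\circ$, so no analogous argument could deliver an equi-Lipschitz bound on $\{V^\eta\}$ near $\partial X$. This is precisely why the proposition is stated for $V^\eta_r$ on $X_r$ rather than for $V^\eta$ on $X$, and it is the structural reason why Question \ref{Q2} is the tractable one. Once the uniform Lipschitz estimate is in hand, the remainder is a standard viscosity-solution stability argument, and no claim of maximality of $\bar V$ is being made here---only that $\bar V$ is \emph{some} solution of \eqref{eq:SolTarget-r}, which is what the statement asserts.
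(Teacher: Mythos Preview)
Your proposal is correct and follows essentially the same route as the paper: uniform coercivity of $H^\eta$ on $X_r$ gives an equi-Lipschitz bound on $\{V^\eta_r\}$, the boundary condition on $Z$ furnishes a uniform $L^\infty$ bound, Arzel\`a--Ascoli extracts a convergent subsequence, and the classical stability argument for viscosity subsolutions (which the paper writes out explicitly rather than citing Barles--Perthame) passes the inequality to the limit. Your additional remarks on why the restriction to $X_r$ is essential and on the absence of a maximality claim are accurate commentary but not part of the proof proper.
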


\begin{proof}
Thanks to \eqref{H-eta-uniformly-coercive},  $H^\eta$ is uniformly coercive on $X_r$, that is,
\[
\lim_{|p| \to \infty} \inf_{\eta \in (0,1)} \min_{x \in X_r} H^\eta(x,p) = +\infty;
\]
and $H^\eta \to H$ uniformly on $X_r \times B_R$ as $\eta \to 0$ for each given $R>0$.
Therefore, there exists $C=C(r)>0$ such that 
\begin{equation}\label{eq:bound-DV-eta}
\|DV^\eta_r\|_{L^\infty(X_r)} \leq C(r).
\end{equation}
Since $V^\eta_r=0$ on $\target$ and $X_r$ is compact, we imply that, there exists a constant $C=C(r)>0$ independent of $\eta$ such that
\begin{equation}\label{eq:bound-V-eta}
\|V^\eta_r\|_{L^\infty(X_r)}+\|DV^\eta_r\|_{L^\infty(X_r)} \leq C(r).
\end{equation}

Thanks  to \eqref{eq:bound-V-eta}, we use the Arzel\`a-Ascoli theorem  to find a sequence $\{\eta_k\} \to 0$ such that $V^{\eta_k}_r \to \bar V$ uniformly on $X_r$.
It is clear that $\bar V$ satisfies the bound \eqref{eq:bound-V-eta} as well.

We now need to show that $\bar V$ is a solution to \eqref{eq:SolTarget-r}.
Surely $\bar V=0$ on $Z$.
The subsolution test follows the classical argument on stability of viscosity subsolutions. 
Let us present it here anyway for the sake of completeness.

Take a smooth test function $\varphi$ such that $\bar V - \varphi$ has a strict maximum at $y \in X^\circ_r \setminus \target$.
Take $s>0$ such that $B_s(y) \subset X^\circ_r \setminus \target$.
As $V^{\eta_k}_r \to V$ uniformly on $X_r$, for $k$ sufficiently large, we have that $V^{\eta_k}_r - \varphi$ has a local maximum at $y_k \in B_s(y)$, and $\lim_{k \to \infty} y_k=y$.
Since $V^{\eta_k}_r - \varphi$ has a local maximum at $y_k \in B_s(y)$, by the definition of viscosity subsolutions to \eqref{eq:SolTarget-eta}, 
\[
H^{\eta_k}(y_k, -D\varphi(y_k)) \leq 0.
\]
As $\varphi$ is smooth and $H^{\eta_k} \to H$ locally uniformly on $X_r \times Y$, we let $k \to \infty$ in the above to get
\[
H(y,-D\varphi(y))= \lim_{k \to \infty} H^{\eta_k}(y_k, -D\varphi(y_k)) \leq 0.
\]
\end{proof}

Proposition \ref{prop:subseq-conv} is an important step towards answering a main question in our paper, Question \ref{Q2}. 
Nevertheless, it is not enough to conclude here as we do not know yet whether $\bar V$ is the maximal solution to \eqref{eq:SolTarget-r} or not. As it was shown in \cite{STA}, \eqref{eq:SolTarget-r} has infinitely many solutions, and any convex combination of two solutions is again a solution, which means that we need deeper understanding of the situations to get the maximality.
In the next section, we need to do a delicate and deeper analysis to yield the maximality property.

\medskip

We next show that $V_r$ to $V$ locally uniformly in $X^\circ$ as $r\to 0$.

\begin{proof}[Proof of Theorem \ref{thm:V-r-to-V}]
By repeating the same proof to that of Proposition \ref{prop:subseq-conv}, there exists a subsequence $\{r_k\} \to 0$ such that $V_{r_k} \to \bar V$ locally uniformly in $X^\circ$,
$\bar V$ solves \eqref{eq:SolTarget} and 
\begin{equation}\label{VrV-1}
\bar V \leq V.
\end{equation}
We just need to obtain the reverse inequality. 
This is, in fact, not so hard to see.
For each $k \in \N$, $V$ is a solution to \eqref{eq:SolTarget-r} with $r=r_k$.
Hence,
\[
V \leq V_{r_k} \qquad \text{ on } X_{r_k}.
\]
Let $k \to \infty$ in the above to imply that, for each $r>0$,
\begin{equation}\label{VrV-2}
V \leq \bar V \qquad \text{ on } X_r.
\end{equation}
Combine \eqref{VrV-1} and \eqref{VrV-2} to conclude.

%For each $r>0$ small, denote by $\Phi_T(X_r)$ the set of Lipschitz continuous paths $\phi:[0,T] \to X_r$. Clearly, $\Phi_T(X_r) \subset \Phi_T$. We use formulation of the source problem to note,
%for $x\in \mathcal{B}^1 \cap X_r$,
%\begin{align*}
%V_r(x)&=\inf \left\{ c(\phi)\colon  \phi \in \Phi_T(X_r) \text{ for some } T\geq 0, \phi(0) \in X_r \cap (\mathcal{B}^2 \cup \mathcal{B}^3), \phi(T)=x\right\}\\
%&\geq \inf \left\{ c(\phi)\colon  \phi \in \Phi_T \text{ for some } T\geq 0, \phi(0) \in X \cap (\mathcal{B}^2 \cup \mathcal{B}^3), \phi(T)=x\right\}\\
%&=V(x).
%\end{align*}
%The inequality in line $2$ of above computations is simple because of the fact that we take infimum of the cost function among more curves.
%Therefore, $\bar V \geq V$, and the proof is complete.
\end{proof}

In fact, in the above proof, we have shown that $V_r \to V$ locally uniformly in $X^\circ$ in a decreasing way.

\section{Proof of Theorem \ref{thm:main2}} \label{sec:main}
\subsection{The case of three equilibria}
%Let $e_1, e_2, e_3$ be the standard basis of $\R^3$ be the three equilibria. Then,
%\[
%X= \left\{x_1 e_1 + x_2 e_2 + x_3 e_3\colon  x_1, x_2, x_3 \geq 0,  \ x_1+ x_2+x_3=1 \right\},
%\]
%and
%\[
%Y=TX= \R^3_0 = \left\{u\in \R^3\colon  u_1+u_2+u_3=0\right\}.
%\]
%For $1\leq i \leq 3$, set
%\[
%\mathcal{B}^i=\left\{x\in X\colon (Ax)_i \geq (Ax)_j \quad \text{ for all } 1\leq j \leq 3 \right\}.
%\]
%The Hamiltonian $H: X \times Y \to \R$ has the following formula
%\[
%H(x,u) = \max_{i,j} \left( u_j- \Upsilon_j(Ax) - u_i \right) \vee 0.
%\]
%For $\eta>0$,  the formula of $H^\eta$ is
%\[
%H^\eta(x,u) = \eta \log \left(\sum_{i,j} x_i e^{\frac{u_j-u_i}{\eta}} \frac{e^{\frac{A^j x}{\eta}}}{\sum_k e^{\frac{A^k x}{\eta}}} \right)
%\]

%In this section, we always assume $n=3$ and the hypotheses in Theorem \ref{thm:main2}.
The situation in this case is quite complicated because of the two dimensional topology of $X$.
We choose to do the case $n=3$ first to show our ideas clearly.
The general case is done in a similar manner afterwards.
%For $r>0$ sufficiently small, denote by
%\[
%y_1=(1-2r,r,r), \ y_2=(r,1-2r,r), \ y_3=(r,r,1-2r),
%\]
%and $X_r$ the convex hull of $\{y_1,y_2,y_3\}$.
%We now restrict our PDEs to $X_r$ instead of $X$.
%Assume that $\source = X_r \cap (\mathcal{B}^2 \cup \mathcal{B}^3)$. 
%Let $V_r$ be the maximal Lipschitz continuous solution to
%\begin{equation}\label{eq:SolSource-r}
%\begin{cases}
%H(x,DV_r(x)) \leq 0&\text{for almost all }x \in X_r^\circ;\\
%V_r(x^\ast) = 0&\text{for all }x^\ast \in \source.
%\end{cases}
%\end{equation}
%For $\eta>0$, let $V^\eta_r$ be the maximal Lipschitz continuous solution to
%\begin{equation}\label{eq:SolSource-eta-r}
%\begin{cases}
%H^\eta(x,DV^\eta_r(x)) \leq 0&\text{for almost all }x \in X_r^\circ;\\
%V^\eta_r(x^\ast) = 0&\text{for all }x^\ast \in \source.
%\end{cases}
%\end{equation}

\begin{proof}[Proof of Theorem \ref{thm:main2} in case $n=3$]
Again, we need to investigate the zero level set of $H$ and zero sublevel set of $H^\eta$ first.
By Lemma \ref{lem:N}, for $x\in \cB^1$, we have that
\[ 
N(x)=\{p\in \R^2 \colon H(x,p)=0\} = [0,A^{1-2}x]\times [0,A^{1-3} x].
\]
First, fix $\delta>0$ small enough and $\theta \in (1/2,1)$ such that $\theta$ is close to $1$. Denote by
\[
D=\left\{ x\in \mathcal{B}^1 \cap X_r\colon  V_r(x) \leq \delta\right\}.
\]

For $x\in (\mathcal{B}^1 \cap X_r) \setminus D$, we see that there exists $\bar \delta>0$ such that  
\[
\min\{A^{1-2}x, A^{1-3}x\} \geq \bar \delta.
\]
 Let $N^\eta(x)=\{p \in \R^2 \colon  H^\eta(x,p) \leq 0\}$.
We compute that $H^\eta(x,p) \leq 0$ if and only if
\begin{align*}
&\sum_{i,j} x_i e^{\frac{u_j-u_i}{\eta}} e^{\frac{A^j x}{\eta}} \leq \sum_k e^{\frac{A^k x}{\eta}}\\
\iff \  & \left(x_1 e^{\frac{u_2-u_1}{\eta}} e^{\frac{A^2x}{\eta}} + x_2 e^{\frac{u_1-u_2}{\eta}} e^{\frac{A^1x}{\eta}} \right)
+\left(x_1 e^{\frac{u_3-u_1}{\eta}} e^{\frac{A^3x}{\eta}} + x_3 e^{\frac{u_1-u_3}{\eta}} e^{\frac{A^1x}{\eta}} \right)\\
&\quad +\left(x_2 e^{\frac{u_3-u_2}{\eta}} e^{\frac{A^3x}{\eta}} + x_3 e^{\frac{u_2-u_3}{\eta}} e^{\frac{A^2x}{\eta}} \right)\\
&\leq \left( x_2 e^{\frac{A^1x}{\eta}} + x_1 e^{\frac{A^2x}{\eta}}\right)
+ \left( x_1 e^{\frac{A^3x}{\eta}} + x_3 e^{\frac{A^1x}{\eta}}\right)
+ \left( x_2 e^{\frac{A^3x}{\eta}} + x_3 e^{\frac{A^2x}{\eta}}\right).
\end{align*}
It is useful to compare each pair in brackets on left hand side with its corresponding pair on right hand side.
We find particular points that are in $N^\eta(x)$.
First, $0\in N^\eta(x)$.
Second, $p\in N^\eta(x)$ where
\[
\begin{cases}
p_2=u_2-u_1=A^{1-2}x+\eta(\log x_2 - \log x_1),\\
p_3=u_3-u_1=A^{1-3}x+\eta(\log x_3 - \log x_1).
\end{cases}
\]

Third, for $q\in \R^2$ with $q_2=u_2-u_1=0$, $q_3=u_3-u_1=\theta A^{1-3}x$, we see that the first brackets on both sides are the same, and
\begin{align*}
&\left(x_1 e^{\frac{u_3-u_1}{\eta}} e^{\frac{A^3x}{\eta}} + x_3 e^{\frac{u_1-u_3}{\eta}} e^{\frac{A^1x}{\eta}} \right)
+\left(x_2 e^{\frac{u_3-u_2}{\eta}} e^{\frac{A^3x}{\eta}} + x_3 e^{\frac{u_2-u_3}{\eta}} e^{\frac{A^2x}{\eta}} \right)\\
\leq \ & (x_1+x_2+2x_3) e^{\frac{A^3x+\theta A^{1-3}x}{\eta}}
\leq 2 e^{\frac{A^3x+\theta A^{1-3}x}{\eta}} \leq x_3 e^{\frac{A^1x}{\eta}} \\
\leq \ &  \left( x_1 e^{\frac{A^3x}{\eta}} + x_3 e^{\frac{A^1x}{\eta}}\right)
+ \left( x_2 e^{\frac{A^3x}{\eta}} + x_3 e^{\frac{A^2x}{\eta}}\right),
\end{align*}
provided that $\eta (\log 2 - \log x_3) \leq (1-\theta)\bar \delta$, which is used in the last inequality of line $2$ in the computation right above.
So $q\in N^\eta(x)$ for $\eta>0$ sufficiently small such that $\eta (\log 2 - \log r) \leq (1-\theta)\bar \delta$.
Similarly, for $w \in \R^2$ with $w_3=u_3-u_1=0$, $w_2=u_2-u_1=\theta A^{1-2}x$, we have $w\in N^\eta(x)$.

Therefore, for $\eta>0$ such that $\eta(\log 2 - \log r)\leq (1-\theta)\bar \delta$, we get that the convex hull of $\{0,p,q,w\}$ is a subset of $N^\eta(x)$. Thus, for $\eta$ small enough,
\begin{equation}\label{eq:scale-theta}
\theta N(x) \subset N^\eta(x).
\end{equation}
Define
\[
\varphi^\theta(x)=
\begin{cases}
0 \quad &\text{ for } x \in \target \cup D,\\
\theta(V_r(x)-\delta) \quad &\text{ for } x\in (\mathcal{B}^1 \cap X_r)\setminus D.
\end{cases}
\]
Then $D\varphi^\theta(x)=0$ for $x \in (\target \cup D)^\circ$, and if $V_r$ is differentiable at $x \in (\mathcal{B}^1 \cap X_r)\setminus D$ then
\[
D\varphi^\theta(x) = \theta DV_r(x) \in \theta N(x) \subset N^\eta(x).
\]
Thus, for $\eta>0$ small enough, $\varphi^\theta$ is a solution to \eqref{eq:SolTarget-eta-r}.
We yield $V^\eta_r \geq \varphi^\theta$.
Combine this with Proposition \ref{prop:subseq-conv} to imply, for any sequence $\{\eta_k\} \to 0$ such that $V^{\eta_k}_r \to \bar V$ uniformly on $X_r$,
\[
\bar V \geq \varphi^\theta.
\]
We get the desired result by letting $\theta \to 1$ and $\delta \to 0$ in this order.
\end{proof}
%We thus have a satisfactory answer by restricting the convergence problem to $X_r$.
%In term of PDE, this is quite natural to expect as we only have convergence of $H^\eta$ to $H$ uniformly on $X_r$.
%The behavior of these PDEs near or on the boundary of $X$ is quite delicate as actually $H^\eta$ does not converge to $H$ on $\ol{X}$ as $\eta \to 0$.

\subsection{The general case}

\begin{proof}[Proof of Theorem \ref{thm:main2} in case $n \geq 3$]
By Lemma \ref{lem:N}, for $x\in \cB^1$, we have that
\[ 
N(x)=\{p\in \R^{n-1} \colon H(x,p)=0\} = [0,A^{1-2}x]\times [0,A^{1-3} x] \times \cdots \times [0,A^{1-n}x].
\]
First, fix $\delta>0$ small enough and $\theta \in (1/2,1)$ such that $\theta$ is close to $1$. Denote by
\[
D=\left\{ x\in \mathcal{B}^1 \cap X_r\colon  V_r(x) \leq \delta\right\}.
\]

For $x\in (\mathcal{B}^1 \cap X_r) \setminus D$, we see that there exists $\bar \delta>0$ such that  
\[
\min\{A^{1-2}x, A^{1-3}x, \ldots ,A^{1-n}x\} \geq \bar \delta.
\]
 Let $N^\eta(x)=\{p \in \R^{n-1} \colon  H^\eta(x,p) \leq 0\}$.
 By repeating the same analysis as in the above proof in a careful manner, for $\eta>0$ such that  $\eta(\log 2 - \log r)\leq (1-\theta)\bar \delta$,
 \begin{equation*}
\theta N(x) \subset N^\eta(x).
\end{equation*}
Define
\[
\varphi^\theta(x)=
\begin{cases}
0 \quad &\text{ for } x \in \target \cup D,\\
\theta(V_r(x)-\delta) \quad &\text{ for } x\in (\mathcal{B}^1 \cap X_r)\setminus D.
\end{cases}
\]
Then $D\varphi^\theta(x)=0$ for $x \in (\target \cup D)^\circ$, and if $V_r$ is differentiable at $x \in (\mathcal{B}^1 \cap X_r)\setminus D$ then
\[
D\varphi^\theta(x) = \theta DV_r(x) \in \theta N(x) \subset N^\eta(x).
\]
Thus, for $\eta>0$ small enough, $\varphi^\theta$ is a solution to \eqref{eq:SolTarget-eta-r}.
We yield $V^\eta_r \geq \varphi^\theta$.
Combine this with Proposition \ref{prop:subseq-conv} to imply, for any sequence $\{\eta_k\} \to 0$ such that $V^{\eta_k}_r \to \bar V$ uniformly on $X_r$,
\[
\bar V \geq \varphi^\theta.
\]
We get the desired result by letting $\theta \to 1$ and $\delta \to 0$ in this order.
\end{proof}

We immediately get the following corollary.
\begin{cor}\label{cor:gen}
Assume $n \geq 2$, and $Z= \ol{X \setminus \cB^1}$.
For each $\eta>0$ sufficiently small, pick $r_\eta>0$ so that $\lim_{\eta \to 0} r_\eta = \lim_{\eta \to 0} \eta \log r_\eta = 0$.
Let $V$ be the maximal Lipschitz continuous solution to \eqref{eq:SolTarget}.
For each $\eta>0$, let $V^\eta$ be the maximal Lipschitz continuous solution to \eqref{eq:SolTarget-eta-r} with $r=r_\eta$.
Then $V^\eta \to V$ locally uniformly on $X^\circ$ as $\eta \to 0$.
\end{cor}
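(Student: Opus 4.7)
\textbf{Proof proposal for Corollary \ref{cor:gen}.} The plan is to run a diagonal argument that chains together Theorem \ref{thm:V-r-to-V} (which gives $V_r \to V$ locally uniformly on $X^\circ$ as $r \to 0$) with the proof of Theorem \ref{thm:main2}, applied now with $r = r_\eta$ shrinking together with $\eta$. The scaling hypothesis $\eta \log r_\eta \to 0$ is tailor-made for this: it is precisely what makes the error terms arising from the coercivity bound and from the scaling-inclusion condition $\eta(\log 2 - \log r) \le (1-\theta)\bar\delta$ used in the proof of Theorem \ref{thm:main2} vanish in the limit, even as $r$ shrinks to zero.

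First I would establish uniform a priori bounds on $V^\eta := V^\eta_{r_\eta}$. The uniform coercivity estimate
\[
H^\eta(x,p) \ge \frac{|p|}{n-1} - \max_{2 \le j \le n} A^{1-j}x + \eta(\log r_\eta - \log n),
\]
combined with $\eta \log r_\eta \to 0$, gives a Lipschitz bound $\|DV^\eta\|_{L^\infty(X_{r_\eta})} \le C$ independent of $\eta$, and therefore a uniform $L^\infty$ bound, since $V^\eta = 0$ on $Z \cap X_{r_\eta}$. Arzel\`a--Ascoli applied along any sequence $\eta_k \to 0$, via a diagonal extraction over an exhaustion of $X^\circ$ by compacts, produces (after passing to a subsequence) a Lipschitz function $\bar V$ on $X$ with $V^{\eta_k} \to \bar V$ locally uniformly on $X^\circ$ and $\bar V = 0$ on $Z$. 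The estimate $\|H^{\eta_k} - H\|_{L^\infty(X_{r_{\eta_k}} \times Y)} \le \eta_k(\log n - \log r_{\eta_k}) \to 0$ together with standard viscosity stability then shows that $\bar V$ is a subsolution of $H(x, -D\bar V) \le 0$ on $X^\circ$, so by maximality (Theorem \ref{thm:Target}), $\bar V \le V$ on $X$.

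For the reverse inequality $\bar V \ge V$ I would adapt the lower-bound construction from the proof of Theorem \ref{thm:main2}. Fix $\theta \in (1/2,1)$ and $\delta > 0$, and set $D_r = \{x \in \cB^1 \cap X_r : V_r(x) \le \delta\}$. Because the Lipschitz constant of $V_r$ is controlled uniformly in $r$ by the $r$-independent coercivity of $H$ on $X^\circ$, and because $A^{1-j}x$ controls $\dist(x,\cB^{1j})$ near $\cB^1$, one can choose a threshold $\bar\delta = \bar\delta(\delta) > 0$ independent of $r$ with $\min_{2 \le j \le n} A^{1-j}x \ge \bar\delta$ on $(\cB^1 \cap X_r) \setminus D_r$. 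The scaling inclusion $\theta N(x) \subset N^\eta(x)$ established in the proof of Theorem \ref{thm:main2} then holds on this set whenever $\eta(\log 2 - \log r) \le (1-\theta)\bar\delta$, a condition fulfilled by $(\eta,r) = (\eta_k, r_{\eta_k})$ for all sufficiently large $k$ under the hypothesis $\eta \log r_\eta \to 0$. Hence the function
\[
\varphi^\theta_k(x) = \begin{cases} 0 & x \in Z \cup D_{r_{\eta_k}}, \\ \theta(V_{r_{\eta_k}}(x) - \delta) & x \in (\cB^1 \cap X_{r_{\eta_k}}) \setminus D_{r_{\eta_k}}, \end{cases}
\]
is an admissible Lipschitz subsolution of \eqref{eq:SolTarget-eta-r} with $(\eta,r) = (\eta_k, r_{\eta_k})$, and maximality of $V^{\eta_k}$ forces $V^{\eta_k} \ge \varphi^\theta_k$ on $X_{r_{\eta_k}}$. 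Passing to the limit, with the aid of Theorem \ref{thm:V-r-to-V} giving $V_{r_{\eta_k}} \to V$ locally uniformly on $X^\circ$, yields $\bar V \ge \theta(V - \delta)_+$ on every compact $K \subset X^\circ$; letting $\theta \to 1$ and then $\delta \to 0$ gives $\bar V \ge V$. Combined with $\bar V \le V$, every subsequential limit equals $V$, so the full family $V^\eta$ converges to $V$ locally uniformly on $X^\circ$.

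The step I expect to be the most delicate is the $r$-uniform choice of $\bar\delta$ in the third paragraph: the argument needs the $r$-independent Lipschitz bound on $V_r$ to combine with the geometric inequality $\min_j A^{1-j}x \gtrsim \dist(x, Z)$ into a threshold depending on $\delta$ alone. Once this uniformity is granted, the hypothesis $\eta \log r_\eta \to 0$ does the remaining work, as it simultaneously controls the coercivity defect of $H^\eta$, the convergence $H^\eta \to H$ on $X_{r_\eta} \times Y$, and the admissibility of the barriers $\varphi^\theta_k$.
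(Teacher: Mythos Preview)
Your proposal is correct and follows essentially the same route the paper indicates: rerun the proof of Theorem \ref{thm:main2} with $r=r_\eta$, using the hypothesis $\eta\log r_\eta\to 0$ to guarantee the admissibility condition $\eta(\log 2-\log r_\eta)\le(1-\theta)\bar\delta$ for small $\eta$, and then invoke Theorem \ref{thm:V-r-to-V} to identify the limit as $V$. Your explicit attention to the $r$-uniform choice of $\bar\delta$ (via the $r$-independent Lipschitz bound on $V_r$ from the coercivity of $H$) is a point the paper leaves implicit in its one-line remark after the corollary, but it is indeed the correct way to close the argument.
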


The proof of Corollary \ref{cor:gen} follows exactly the same lines as that of Theorem \ref{thm:main2} and hence is omitted.
A key point that we need here is
\[
\lim_{\eta \to 0} \eta (\log 2 - \log r_\eta) = 0,
\]
and therefore, for each $\theta \in (1/2,1)$ and $\bar \del>0$, there exists $\eta_0>0$ such that
\[
\eta(\log 2 - \log r_\eta)\leq (1-\theta)\bar \delta \qquad \text{ for all } \eta \in (0,\eta_0).
\]

\section{Conclusions} \label{sec:con}
We have completely answered Question \ref{Q2} by Theorem \ref{thm:main2}.
Besides, Theorem \ref{thm:V-r-to-V} gives us that $V_r \to V$ locally uniformly in $X^\circ$, which shows that it is quite reasonable to consider the problems in a restricted simplex $X_r$ for $r>0$ sufficiently small and pass to the limit if necessary.
This is also of practical use.

For Question 1, we have only the affirmative answer when $n=2$, and this was proved earlier in \cite{S}.
The question is still open for $n\geq 3$.
As $H^\eta$ and $L^\eta$ have quite singular behavior near $\partial X$ for each $\eta>0$, it seems that one needs to study finer properties of $H^\eta, L^\eta$ in order to proceed further.
For example, it is not clear at all if $V^\eta$, the maximal solution to \eqref{eq:SolTarget-eta}, is globally  Lipschitz on $X$ or not.

Here, we have addressed Questions \ref{Q1}--\ref{Q2} concerning the target problem with a fixed target $Z=\ol{X\setminus \cB^1}$.
Other selection problems with different targets should be considered and analyzed.
Besides, selection problems for the source problem should also be studied  in the near future.

%%%%%%%%%%%%%%%%%%%%%%%%%%%%%%%%%%%%%%%%%%%%%%%%%%%%%%%%%%%%%%%%%%%%%%%%

\begin{thebibliography}{30}

\bibitem{Ari}

S. Arigapudi,
Transitions between equilibria in bilingual games under logit choice. 
\emph{Journal of Mathematical Economics} 86:24--34 (2020).

\bibitem{AT}
S. N. Armstrong, H. V. Tran,
Viscosity solutions of general viscous Hamilton-Jacobi equations,
\emph{Math. Ann.} 361(3-4):647--687  (2015).

\bibitem{BiSa}
K. Binmore, L. Samuelson,
Muddling through: Noisy equilibrium selection, 
\emph{Journal of Economic Theory}, 74, 235--265 (1997).

\bibitem{Bl}
L. E. Blume,
The statistical mechanics of strategic interaction,
\emph{Games and Economic Behavior}, 5, 387--424 (1993).

\bibitem{CDL}
I. Capuzzo-Dolcetta, P.-L. Lions,
Hamilton-Jacobi equations with state constraints. 
\emph{Trans. Amer. Math. Soc.} 318(2):643--683 (1990).

\bibitem{KTT}
Y. Kim, H. V. Tran, S. N. T. Tu, 
State-constraint static Hamilton-Jacobi equations in nested domains, 
\emph{SIAM journal on Mathematical Analysis}, accepted.

\bibitem{S-book}
W. H. Sandholm, 
\emph{Population Games and Evolutionary Dynamics} 
(Cambridge: MIT Press).

\bibitem{S}
W. H. Sandholm,
Orders of limits for stationary distributions, stochastic dominance, and stochastic stability.
\emph{Theoretical Economics} 5:1--26 (2010).

\bibitem{S12}
W. H. Sandholm,
Stochastic imitative game dynamics with committed agents,
\emph{Journal of Economic Theory}, 147, 2056--2071 (2012).

\bibitem{SS}
W. H. Sandholm, M. Staudigl,  
Large deviations and stochastic stability in the small noise double limit,
\emph{Theoretical Economics} 11:279--355 (2016).

\bibitem{SS2}
W. H. Sandholm, M. Staudigl,
 Sample Path Large Deviations for Stochastic Evolutionary Game Dynamics,
 \emph{Math. Oper. Res.} 43, 4 (November 2018), 1348--1377.

\bibitem{STA}
W. H. Sandholm,  H. V. Tran,  S. Arigapudi,
Hamilton-Jacobi Equations with Semilinear Costs and State Constraints, with Applications to Large Deviations in Games,
\emph{Mathematics of Operations Research}, accepted.

\bibitem{Soner}
H. M. Soner,
 Optimal control with state-space constraint. I. 
 \emph{SIAM J. Control Optim.} 24(3):552--561, 1986.

\bibitem{Tr}
H. V. Tran,
\emph{Hamilton--Jacobi equations: viscosity solutions and applications}, second draft, 2019. 

\bibitem{Tu} 
S. N. T. Tu,
Vanishing discount problems for Hamilton--Jacobi equations on changing domains,
arXiv:2006.15800 [math.AP].

\end {thebibliography}
\end{document}